\newtheorem{thm}{Theorem}  
\newtheorem{cor}[thm]{Corollary}
\newtheorem{lemma}[thm]{Lemma}
\theoremstyle{definition}
\newtheorem{defn}[thm]{Definition}
\def\L{\mathcal{L}}
\def\D{\mathcal{D}}
\def\g{\Gamma}
\def\d{\Delta}
\def\To{\Rightarrow}
\newcommand{\GSD}{\Gamma\Rightarrow\Delta}
\newcommand{\infrule}[1]{\scriptstyle\it{#1}}
\newcommand{\prt}{\; ; \;}
\title{Interpolation in extensions of first-order logic}
\author{G. Gherardi}
\affil{Dipartimento di Filosofia e Comunicazione, Universit\`{a} di Bologna, guido.gherardi@unibo.it}
\author{P. Maffezioli}
\affil{Institut f\"{u}r Philosophie II, Ruhr-Universit\"{a}t Bochum, paolo.maffezioli@ruhr-uni-bochum.de}
\author{E. Orlandelli}
\affil{Dipartimento di Filosofia e Comunicazione, Universit\`{a} di Bologna, eugenio.orlandelli@unibo.it}
\date{}
\begin{document}
 
 \maketitle
 
 \begin{abstract}
 We prove a generalization of Maehara's lemma to show that the extensions of classical and intuitionistic first-order logic with a special type of geometric axioms, called singular geometric axioms, have Craig's interpolation property. As a corollary, we obtain a direct proof of interpolation for (classical and intuitionistic) first-order logic with identity, as well as interpolation for several mathematical theories, including the theory of equivalence relations, (strict) partial and linear orders, and various intuitionistic order theories such as apartness and positive partial and linear orders. 
 \end{abstract}

Craig's interpolation theorem \cite{Craig1957} is a central result in first-order logic. It asserts that for any theorem $A \to B$ there exists a formula $C$, called \emph{interpolant}, such that $A \to C$ and $C \to B$ are also theorems and $C$ only contains non-logical symbols that are contained in both $A$ and $B$ (and if $A$ and $B$ have no non-logical symbols in common, then either $\neg A$ is a theorem or $B$ is). The aim of this paper is to extend interpolation beyond first-order logic. In particular, we show how to prove interpolation in extensions of intuitionistic and classical sequent calculi with \emph{singular geometric rules}, a special case of geometric rules investigated in \cite{Negri2003}. Interpolation for singular geometric rules will be obtained by generalizing a standard result, reportedly due to Maehara in \cite{Takeuti1987} and known as \textquotedblleft Maehara's lemma\textquotedblright{} \cite{Maehara1960}.\footnote{In this work we shall not consider semantic methods to prove interpolation. These have been applied extensively to non-classical logics in \cite{GabbayMaksimova2005}; there are also proofs of interpolation for non-classical logics that are more similar to our approach, especially \cite{BaazIemhoff2005,FittingKuznets2015,Kuznets2018}.} 

The proof of Maehara's lemma for intuitionistic and classical first-order logic requires cut elimination. This clearly challenges the project of proving the lemma for systems extending first-order logic with  axioms, since such systems are not generally cut-free (cf.~\cite[\S 4.5]{TroelstraSchwichtenberg2000} and \cite [\S 6.3]{NegrivonPlato2001} for  different approaches to non-logical axioms).  
For example, in the calculus $\mathsf{LK_e}$, an extension of Gentzen's $\mathsf{LK}$ for first-order logic with identity, cuts on identities $s = t$ are not eliminable (cf. Theorem 6 in \cite{Takeuti1987}, where these cuts are called \textquotedblleft inessential\textquotedblright{}). Fortunately, interpolation can still be proved for first-order logic with identity. The drawback of the existing proofs, however, is that
they are indirect, in the sense that the interpolant is not built using exclusively the rules of the calculus. In \cite{TroelstraSchwichtenberg2000}, for example, a translation is used to reduce interpolation for first-order logic with identity to the case of pure first-order logic.\footnote{For other proofs of interpolation via translation see \cite{RasgaCarnielliSernadas2009} and \cite{BonacinaJohansson2015}.} A different route is taken in \cite{Gallier2015}, using the method of \textquotedblleft axioms in the context,\textquotedblright{} where interpolation is again not proved directly in $\mathsf{LK_e}$, but in an variant of $\mathsf{LK}$, equivalent to $\mathsf{LK_e}$, in which all derivable sequents have the axioms governing the identity predicate in the context.\footnote{Thanks to a referee for bringing this to our attention.} Beside the use of such indirect maneuvers, these approaches are specifically designed for first-order logic with identity and it is not entirely obvious how to adapt them to other extensions of first-order logic. On the other hand, in this paper interpolation is proved via a generalization of Maehara's lemma to a class of extensions of first-order logic (which include first-order logic with identity as a particular case) and using no other means than the rules of the calculus (Lemma \ref{th:maehara_singular}).  
 
Our generalization of Maehara's lemma is based on previous work by Negri and von Plato who have shown (in a series of papers starting from \cite{NegrivonPlato1998}) how to recover cut elimination (as well as the admissibility of other structural rules) for extensions of the calculi $\mathsf{G3c}$ and $\mathsf{m\textnormal{-}G3i}$ for classical and intuitionistic first-order logic. Of particular interest for the present work are the extensions with geometric rules, investigated in \cite{Negri2003}.\footnote{We depart from Negri's approach in taking the intuitionistic single-succedent calculus $\mathsf{G3i}$ instead of the multi-succedent $\mathsf{m\textnormal{-}G3i}$ of \cite{Negri2003}; in Theorem \ref{prop:structural_properties_geometric_in} we will also prove, along the way, that cut elimination holds for geometric extensions of $\mathsf{G3i}$.} Once cut elimination is recovered in this way, we impose a singularity condition on geometric rules to isolate those containing at most one non-logical predicate (identity will be counted as logical). Our main result is to show that Maehara's lemma holds when $\mathsf{G3c}$ and $\mathsf{G3i}$ are extended with singular geometric rules (Lemma \ref{th:maehara_singular}). Then interpolation follows easily from the generalized Maehara's lemma (Theorem \ref{CraigSingular}). Finally, we consider applications of Theorem \ref{CraigSingular} and we show that singular geometric rules include many interesting extensions of intuitionistic and classical first-order logic, especially (classical and intuitionistic) first-order logic with identity, the theory of equivalence relations, (strict) partial and linear orders, 
the theory of apartness and the theory of positive partial and linear orders. 

 \section{Classical and intuitionistic sequent calculi}
 
 The language $\L$ is a first-order language with individual constants and no functional symbols. Terms ($s,t,u,\dots$) are either variables ($x,y,z,\dots$) or individual constants ($a,b,c\dots$). $\L$ contains also denumerably many $k$-ary predicates $P^k,Q^k,R^k, \dots$ for each $k \geq 0$. $\L$ may also contain the identity. We agree that all predicates, except identity, are non-logical. Moreover, it is convenient to have two propositional constants $\bot$ (falsity) and $\top$ (truth). Formulas are built up from atoms $P^k (t_1, \dots , t_k)$, the constants $\bot$ and $\top$ using logical operators $\wedge$, $\vee$, $\to$, $\exists$ and $\forall$ as usual. We use $P,Q,R,\dots$ for atoms, $A,B,C,\dots$ for formulas and $\Gamma, \Delta, \Pi , \dots$ for (possibly empty) finite multisets of formulas. The negation $\neg A$ of a formula $A$ is defined as $A \to \bot$. We also agree that $\Gamma,\Delta$ is an abbreviation for $\Gamma \cup \Delta$ (where $\cup$ is the multiset union) and $\bigwedge\Gamma$ ($\bigvee\Gamma$) stands for the conjunction (disjunction, respectively) of all formulas in $\Gamma$. Moreover, if $\Gamma$ is empty, then $\bigwedge\Gamma\equiv\top$ and $\bigvee\Gamma\equiv\bot$, where $\equiv$ indicates syntactic identity (up to $\alpha$-congruence) between expressions of the object-language. 
 
 The substitution of a variable $x$ with a term $t$ in a term $s$ (in a formula $A$, in a multiset $\Gamma$) will be indicated as $s [\,{}_{x}^{t}\,]$ ($A [\,{}_{x}^{t}\,]$ and $\Gamma [\,{}_{x}^{t}\,]$, respectively) and defined as usual. To indicate the simultaneous substitution of the list of variables $x_1 , \dots , x_n$ (abbreviated in $\bar{x}$) with the list of terms $t_1, \dots , t_n$ (abbreviated in $\bar{t}$), we use $[\,{}_{\bar{x}}^{\bar{t}}\,]$ in place of $[\,{}_{x_1\,\dots\,x_n}^{t_1\,\dots\,t_n}\,]$. Later on, we shall also need a more general notion of substitution of terms for terms (not just variables) which will be proved to preserve derivability (Lemma \ref{th:substitution_general}). 
 
Finally, let $\mathsf{FV} (A)$ be the set of free variables of a formula $A$ and let $\mathsf{Con} (A)$ be the set of its individual constants. We agree that the set of terms $\mathsf{Ter} (A)$ of $A$ is $\mathsf{FV} (A) \cup \mathsf{Con} (A)$. Moreover, if $\mathsf{Rel}(A)$ is the set of non-logical predicates of $A$ then we define the language $\L(A)$ of $A$ as $\mathsf{Ter} (A) \cup \mathsf{Rel}(A)$. Notice that $= \; \notin \mathcal{L} (A)$, for all $A$. Such notions are immediately extended to multisets of formulas $\Gamma$, by letting $\mathsf{FV} (\Gamma)$ to be defined as $\bigcup_{A\in\Gamma} \mathsf{FV} (A)$, and analogously for $\mathsf{Con} (\Gamma)$, $\mathsf{Ter} (\Gamma)$, $\mathsf{Rel}(\Gamma)$ and $\L (\Gamma)$.
  
 The calculus $\mathsf{Gc}$ ($\mathsf{Gi}$) is a variant of $\mathsf{LK}$ ($\mathsf{LI}$) for classical (intuitionistic, respectively) first-order logic, originally introduced by Gentzen in \cite{Gentzen1969a}. In the literature, especially in \cite{TroelstraSchwichtenberg2000} and \cite{NegrivonPlato2001}, $\mathsf{Gc}$ and $\mathsf{Gi}$ are commonly referred to as $\mathsf{G3c}$ and $\mathsf{G3i}$ but we will omit \textquoteleft $\mathsf{3}$\textquoteright{} in the interest of readability. Moreover, we will write $\mathsf{G}$ to refer to either $\mathsf{Gc}$ or $\mathsf{Gi}$. A sequent in $\mathsf{Gc}$ is a pair $\langle \Gamma,\Delta \rangle$ of multisets, indicated as $\GSD$. The calculus $\mathsf{Gc}$ consists of the following initial sequents and logical rules (where $y$ is an \emph{eigenvariable} in $R\forall$ and $L\exists$, i.e. $y$ must not occur free in the conclusion of these rules):

 \begin{center}
\begin{tabular}{cc}
\multicolumn{2}{c}{\emph{The calculus $\mathsf{Gc}$}}\\\\
\multicolumn{2}{c}{$P , \GSD , P$}\\\\
$\infer[\infrule{L\bot}]{\bot , \GSD}{}$ & $\infer[\infrule{R\top}]{\GSD , \top}{}$  \\\\

%
%

$\infer[\infrule{L\wedge}]{A \wedge B , \GSD}{A , B , \GSD}$

&

$\infer[\infrule{R\wedge}]{\GSD , A \wedge B}{\GSD , A & \GSD , B}$

\\\\

$\infer[\infrule{L\vee}]{A \vee B , \GSD}{A , \GSD & B , \GSD}$

&

$\infer[\infrule{R\vee}]{\GSD , A \vee B}{\GSD , A , B}$

\\\\

$\infer[\infrule{L\to}]{A \to B , \GSD}{\GSD , A & B , \GSD}$

&

$\infer[\infrule{R\to}]{\GSD , A \to B}{A , \GSD , B}$

\\\\

$\infer[\infrule{L\forall}]{\forall x A , \GSD}{A [\, {}_{x}^{t} \,] , \forall x A , \GSD}$

&

$\infer[\infrule{R\forall}]{\GSD ,\forall x A}{\GSD , A [\,{}_{x}^{y}\,]}$

\\\\

$\infer[\infrule{L\exists}]{\exists x A , \GSD}{A [\,{}_{x}^{y}\,] , \GSD}$

&

$\infer[\infrule{R\exists}]{\GSD ,\exists x A}{\GSD , \exists x A , A [\,{}_{x}^{t}\,]}$

\end{tabular}
\end{center}

Sequents in $\mathsf{Gi}$ are defined as in $\mathsf{Gc}$, except that $\Delta$ must contain exactly one formula. The calculus $\mathsf{Gi}$ has the following initial sequents and logical rules (again, $y$ is an \emph{eigenvariable} in $R\forall$ and $L\exists$). 

 \begin{center}
\begin{tabular}{cc}
\multicolumn{2}{c}{\emph{The calculus $\mathsf{Gi}$}}\\\\
\multicolumn{2}{c}{$P , \Gamma \Rightarrow P$}\\\\
$\infer[\infrule{L\bot}]{\bot , \Gamma \Rightarrow C}{}$&$\infer[\infrule{R\top}]{\Gamma \Rightarrow \top}{}$  \\\\


$\infer[\infrule{L\wedge}]{A \wedge B , \Gamma \Rightarrow C}{A , B , \Gamma \Rightarrow C}$

&

$\infer[\infrule{R\wedge}]{\Gamma \Rightarrow A \wedge B}{\Gamma \Rightarrow A & \Gamma \Rightarrow B}$

\\\\

$\infer[\infrule{L\vee}]{A \vee B , \Gamma \Rightarrow C}{A , \Gamma \Rightarrow C & B , \Gamma \Rightarrow C}$

&

$\infer[\infrule{R\vee_1}]{\Gamma \Rightarrow A \vee B}{\Gamma \Rightarrow A}$ \quad $\infer[\infrule{R\vee_2}]{\Gamma \Rightarrow A \vee B}{\Gamma \Rightarrow B}$

\\\\

$\infer[\infrule{L\to}]{A \to B , \Gamma \Rightarrow C}{A \to B , \Gamma \Rightarrow A & B , \Gamma \Rightarrow C}$

&

$\infer[\infrule{R\to}]{\Gamma \Rightarrow A \to B}{A , \Gamma \Rightarrow B}$

\\\\

$\infer[\infrule{L\forall}]{\forall x A , \Gamma \Rightarrow C}{A [\, {}_{x}^{t} \,] , \forall x A , \Gamma \Rightarrow C}$

&

$\infer[\infrule{R\forall}]{\Gamma \Rightarrow \forall x A}{\Gamma \Rightarrow A [\,{}_{x}^{y}\,]}$

\\\\

$\infer[\infrule{L\exists}]{\exists x A , \Gamma \Rightarrow C}{A [\,{}_{x}^{y}\,] , \Gamma \Rightarrow C}$

&

$\infer[\infrule{R\exists}]{\Gamma \Rightarrow \exists x A}{\Gamma \Rightarrow A [\,{}_{x}^{t}\,]}$

\end{tabular}
\end{center}

A derivation in $\mathsf{G}$ is a tree of sequents which grows according to the rules of $\mathsf{G}$ and whose leaves are initial sequents or conclusions of a 0-premise rule. A derivation of a sequent is a derivation concluding that sequent and a sequent is derivable when there is a derivation of it.  As usual, we consider only \emph{pure-variable derivations}: bound and free variables are kept distinct, and no two rule instances have the same variable as \emph{eigenvariable}, see \cite[p. 38]{TroelstraSchwichtenberg2000}. 
The height $h$ of a derivation is defined inductively as follows: the derivation height of an initial sequent or of a conclusion of a 0-premise rule is 0, the derivation height of a derivation of a conclusion of a one-premise rule is the derivation height of its premise plus 1, and the derivation height of a derivation of a conclusion of a $n$-premise rule ($n \geq 2$) is the maximum of the derivation heights of its premises plus 1. A sequent is $h$-derivable if it is derivable with a derivation of height less than or equal to $h$. A rule is admissible if the conclusion is derivable whenever the premises are derivable; a rule is height-preserving admissible if the conclusion is $h$-derivable whenever the premises are $h$-derivable. 
Derivations will be denoted by $\D , \D_1, \D_2, \dots$. We agree to use $\D \vdash \GSD$ to indicate that $\D$ is a derivation in $\mathsf{G}$ of $\GSD$ and $\vdash \GSD$ to indicate that $\GSD$ is derivable; finally, $\vdash^h \GSD$ indicates that $\GSD$ is $h$-derivable. We will use a double-line rule of the form
$$
\infer=[\infrule R]{\Gamma\To\Delta}{\Pi\To\Sigma}
$$
to indicate that $\GSD$ is derivable from $\Pi\To\Sigma$ by  a (possibly empty) sequence of instances of the rule \mbox{\it{R}}.  It is easy to see that initial sequents with $A , \GSD , A$, for an arbitrary $A$, are derivable in $\mathsf{G}$ (where $\d$ is empty for $\mathsf{Gi}$). 

The following structural rules for $\mathsf{Gc}$ (weakening, contraction and cut) are valid in the standard semantics of $\mathsf{Gc}$. 

 \begin{center}
  \begin{tabular}{cc}
  
  \multicolumn{2}{c}{\emph{Structural rules of} $\mathsf{Gc}$}\\\\
  $\infer[\infrule{Wkn}]{A, \Gamma \Rightarrow \Delta}{\Gamma \Rightarrow \Delta}$ & $\infer[\infrule{Wkn}]{\Gamma \Rightarrow \Delta , A}{\Gamma \Rightarrow \Delta}$\\\\
  $\infer[\infrule{Ctr}]{A, \Gamma \Rightarrow \Delta}{A, A , \Gamma \Rightarrow \Delta}$ & $\infer[\infrule{Ctr}]{\Gamma \Rightarrow \Delta , A}{\Gamma \Rightarrow \Delta , A, A}$\\\\
  \multicolumn{2}{c}{$\infer[\infrule{Cut}]{\Gamma , \Pi \Rightarrow \Delta , \Sigma}{\Gamma \Rightarrow \Delta , A & A , \Pi \Rightarrow \Sigma}$}
  \end{tabular}

 \end{center}
 However, we can safely leave them out without impairing the completeness of $\mathsf{Gc}$, since they are all admissible in it. In fact, weakening and contraction are also height-preserving admissible. Regarding $\mathsf{Gi}$, the structural rules are:
 
  \begin{center}
  \begin{tabular}{cc}
  \multicolumn{2}{c}{\emph{Structural rules of} $\mathsf{Gi}$}\\\\
  $\infer[\infrule{Wkn}]{A, \Gamma \Rightarrow C}{\Gamma \Rightarrow C}$ & $\infer[\infrule{Ctr}]{A, \Gamma \Rightarrow C}{A, A , \Gamma \Rightarrow C}$\\\\
   \multicolumn{2}{c}{$\infer[\infrule{Cut}]{\Gamma , \Delta \Rightarrow C}{\Gamma \Rightarrow A & A , \Delta \Rightarrow C}$}
  \end{tabular}
 \end{center}
 These rules are also valid in the model-theoretic semantics for intuitionistic logic, but just like in the classical case, they are all admissible in $\mathsf{Gi}$ (again, weakening and contracting are height-preserving admissible) and there is no need to take any of them as primitive. The proof of the admissibility of the structural rules in any of the two calculi requires some preparatory results. First, the height-preserving admissibility of substitution in $\mathsf{G}$.  
 
 \begin{lemma}\label{sub}
   In $\mathsf{G}$, if $\, \vdash^h \GSD$ and $t$ is free for $x$ in $\Gamma , \Delta$ then  $\, \vdash^h \Gamma [\,{}_{x}^{t}\,] \Rightarrow \Delta [\,{}_{x}^{t}\,]$.
  \end{lemma}
  
  Second, the so-called inversion lemma. Intuitively, a rule is invertible when it can be applied backwards, from the conclusion to its premises, and it is height-preserving invertible when it is invertible with the preservation of the derivation height (for a precise definition of height-preserving invertible rule see \cite[p. 76-77]{TroelstraSchwichtenberg2000}). 
  
  \begin{lemma}\label{inv}
  In $\mathsf{Gc}$ all  rules are height-preserving invertible. In $\mathsf{Gi}$ all rules, except $R\vee$, $L\to$ and $R\exists$, are height-preserving invertible. However, $L\to$ is height-preserving invertible with respect to its right premise. 
  \end{lemma}
   
   With height-preserving admissibility of substitution and inversion lemma, it is possible to prove the admissibility of the structural rules. 
   
   \begin{thm}\label{prop:structural_properties_logic}
 In $\mathsf{G}$ weakening and contraction are height-preserving admissible. Moreover, cut is admissible. 
  
 \end{thm}
 
The proof of Lemma \ref{sub}, Lemma \ref{inv}, and  Theorem \ref{prop:structural_properties_logic} are standard and the interested reader is referred to \cite{TroelstraSchwichtenberg2000} and \cite{NegrivonPlato2001}.

 \subsection{From axioms to rules}
 
 Extensions of $\mathsf{G}$ are not, in general, cut free; this means that Theorem \ref{prop:structural_properties_logic} does not necessarily hold in the presence of new initial sequents or rules. For example, a natural way to extend $\mathsf{Gc}$ to cover first-order logic with identity is to allow derivations to start with initial sequents of the form $\Rightarrow s = s$ and $s = t , P [{}_{x}^{s}] \Rightarrow P [{}_{x}^{t}]$, corresponding to the reflexivity of identity and Leibniz's principle of indescernibility of identicals, respectively (we call these sequents $S_1$ and $S_2$). Notice that $S_2$ is in fact a scheme which becomes $s = t , s = s \Rightarrow  t = s$, when $P$ is $x = s$. From this, via cut on $\Rightarrow s = s$, one derives $s = t \Rightarrow  t = s$, namely the symmetry of identity. However,  such a sequent has no derivation without cut. Therefore, cut is not admissible in $\mathsf{Gc} + \{ S_1 , S_2 \}$, though it is admissible in the underlying system $\mathsf{Gc}$. 
 
In \cite{NegrivonPlato1998} Negri and von Plato have shown how to recover cut elimination for (classical) first-order logic with identity by transforming $S_1$ and $S_2$ into an equivalent pair of rules of the form: 
 
$$
\infer[\infrule{\textnormal{\emph{Ref}}}_=]{\Gamma\Rightarrow\Delta}{s=s,\Gamma\Rightarrow\Delta} \qquad \infer[\infrule{\textnormal{\emph{Repl}}}_=]{s=t,P[{}_{x}^{s}],\Gamma\Rightarrow\Delta}{P[{}_{x}^{t}],s=t,P[{}_{x}^{s}],\Gamma\Rightarrow\Delta}
$$

If one replaces $S_1$ and $S_2$ with the corresponding rules, it is easy to derive $s = t \Rightarrow t = s$ without any application of cut. More generally, cut elimination holds in $\mathsf{Gc} + \{\textnormal{\emph{Ref}} , \textnormal{\emph{Repl}} \}$ (cf. Theorem 4.2 in \cite{NegrivonPlato1998} and \cite[\S 6.5]{NegrivonPlato2001}).  This result can be, and has been, extended in different directions. Here we are particularly interested in the fact, established by \cite{Negri2003}, that cut elimination holds in extensions of $\mathsf{Gc}$ with geometric rules (of which the rules of identity are special cases). The result will be reviewed briefly below, while for a more thorough discussion on this topic the reader is referred to \cite{Negri2003} or the monograph \cite{NegrivonPlato2011}. 

In \cite{Negri2003} Negri also showed that cut elimination holds for geometric theories formulated as extensions of the multi-succedent calculus $\mathsf{m\textnormal{-}G3i}$ for intuitionistic logic, introduced in \cite{Dragalin1988}. For our purposes, however, it is better to work in $\mathsf{Gi}$ as the underlying logical calculus for intuitionistic logic. In this way we can rely on the proof of Maehara's lemma for $\mathsf{Gi}$ already available in the literature (whereas to our knowledge no attempt has been made to obtain a similar result for $\mathsf{m\textnormal{-}G3i}$). In fact, it is not entirely obvious how to prove Maehara's lemma for $\mathsf{m\textnormal{-}G3i}$. Working in $\mathsf{Gi}$ is thus more advantageous as far as Maehara's lemma is concerned, but one needs first to make sure that cut elimination holds in the presence of geometric rules. Thus, after introducing geometric rules, we will show that the standard cut-elimination procedures goes through with minor adjustment in geometric extensions of $\mathsf{Gi}$ (Theorem \ref{prop:structural_properties_geometric_in}).

  \subsection{Geometric theories}

  A \emph{geometric axiom} is a formula  following the \emph{geometric axiom scheme} below:

 $$\forall \bar{x} (P_1  \wedge \dots \wedge P_n \to \exists \bar{y}_1 M_1 \vee \dots \vee \exists \bar{y}_m  M_m)$$
 where each $P_j$ is an atom and each $M_i$ is a conjunction of a list of atoms $Q_{i_{1}} , \dots , Q_{i_{\ell}}$ and none of the variables in any $\bar{y}_i$ are free in the $P_j$s. We shall conveniently abbreviate $Q_{i_{1}} , \dots , Q_{i_{\ell}}$ in $\mathbf{Q}_i$. In a geometric axiom, if $m = 0$ then the consequent of $\to$ becomes $\bot$, whereas if $n = 0$ the antecedent of $\to$ becomes $\top$. A \emph{geometric theory} is a theory containing only geometric axioms. 
An $m$-premise \emph{geometric rule}, for $m\geq 0$, is a rule following the \emph{geometric rule scheme} below:
 $$
  \infer[\infrule{R}]{P_1 , \dots , P_n , \Gamma \Rightarrow \Delta}{\mathbf{Q}^*_1 , P_1 , \dots , P_n , \Gamma \Rightarrow \Delta & \cdots & \mathbf{Q}^*_m , P_1 , \dots , P_n , \Gamma \Rightarrow \Delta}
  $$
where each $\mathbf{Q}^*_i$ is obtained from $\mathbf{Q}_i$ by replacing every variable in $\bar {y}_i$ with a variable which does not occur free in the conclusion. Such variables will be called the \emph{eigenvariables} of $R$.  Without loss of generality, we assume that each $\bar{y}_i$ consists of a single variable. 
In sequent calculus a geometric theory can be formulated by adding on top of $\mathsf{G}$ finitely many geometric rules (recall that $\Delta$ contains exactly one formula in $\mathsf{Gi}$).

Moreover, geometric rules are assumed to satisfy a natural closure property for contraction (see \cite[6.1.7]{NegrivonPlato2001}).

\begin{defn}[Closure condition]\label{closurecondition} If a geometric extension $\mathsf{G}'$ of $\mathsf{G}$ contains a rule where a substitution instance of the principal formulas produces a rule with repetition of the form:  
  
  $$
   \infer[\infrule R]{P_1,\dots,P_{n-2},P,P,\GSD}{\mathbf{Q}_1^*,P_1,\dots,P_{n-2},P,P,\GSD & \cdots & \mathbf{Q}_m^*,P_1,\dots,P_{n-2},P,P,\GSD}
   $$
  then $\mathsf{G}'$ contains or is closed under the following contracted instance of the rule:
  
  $$
   \infer[\infrule R^c]{P_1,\dots,P_{n-2},P,\GSD}{\mathbf{Q}_1^*,P_1,\dots,P_{n-2},P,\GSD & \cdots & \mathbf{Q}_m^*,P_1,\dots,P_{n-2},P,\GSD}
   $$
   \end{defn}
   As an illustration, we consider the rule \emph{Trans}$_\leqslant$ in the theory $\mathsf{PO}$ (see $\S$ \ref{PO}):

   $$
   \infer[\infrule{Trans_\leqslant}]{s \leqslant t , t \leqslant u , \Gamma \xRightarrow{} \Delta}{s \leqslant u , s \leqslant t , t \leqslant u , \Gamma \xRightarrow{} \Delta}
   $$
   Clearly, as an instance of such a rule we have:  
   
   $$
   \infer[\infrule{Trans_\leqslant}]{s \leqslant s , s \leqslant s , \Gamma \xRightarrow{} \Delta}{s \leqslant s , s \leqslant s , s \leqslant s , \Gamma \xRightarrow{} \Delta}
   $$
   Hence  $\mathsf{PO}$ has to be closed under the following contracted instance
   
   $$
   \infer[\infrule{Trans_\leqslant^c}]{s \leqslant s ,  \Gamma \xRightarrow{} \Delta}{s \leqslant s ,  s \leqslant s , \Gamma \xRightarrow{} \Delta}
   $$
   For $\mathsf{PO}$  we don't need to add the contracted rule \emph{Trans}$_\leqslant^c$,  because it is admissible thanks to rule \emph{Ref}$_\leqslant$. In general, however, this is not the case.

Let $\mathsf{G^g}$ be any extension of $\mathsf{G}$ with finitely many geometric rules satisfying the closure condition (from now on, we will tacitly assume that the closure condition is always met). We now show that cut elimination and the admissibility of the structural rules hold in $\mathsf{G^g}$. Although we will heavily rely on \cite{Negri2003}, we start by introducing a more general notion of substitution that allows an arbitrary term $u$ (possibly a constant) to be replaced by a term $t$. In the presence of such general substitutions, special care is needed in order to maintain the height-preserving admissibility of substitutions. In particular, general substitutions are height-preserving admissible, provided that the replaced term $u$ does not occur essentially in the calculus. Intuitively, a term $u$ occurs essentially in a rule $R$ when $u$ cannot be replaced (by an arbitrary term), namely when $u$ is a constant and $u$ already occurs in the axiom from which $R$ is obtained. More precisely, 

\begin{defn}
A constant $u$ occurs essentially in a geometric axiom $A$ if and only if, for some $t\not\equiv u$, $A [\,{}_{u}^{t}\,]$ is not an instance of the axiom $A$.  
\end{defn}

\noindent We also agree that a term $u$ occurs essentially in a geometric rule $R$ when it does so in the corresponding axiom. For example, in the geometric axiom $\neg 1 \leqslant 0$ of non-degenerate partial orders (see \cite[p. 116]{NegrivonPlato2011}) both $1$ and $0$ occur essentially; hence they also occur essentially in the corresponding geometric rule \emph{Non-deg}:
$$
\infer[\infrule{\textnormal{\emph{Non-deg}}}]{1 \leqslant 0 , \GSD}{}
$$

%
%
%
%

%
Now we show that the general substitution $[\,{}_{u}^{t}\,]$ is height-preserving admissible in $\mathsf{G^g}$, provided that $u$ occurs essentially in none of its geometric rule.
 \begin{lemma}\label{th:substitution_general}
 In $\mathsf{G^g}$, if $\, \vdash^n \GSD$, $t$ is free for $u$ in $\Gamma,\Delta$, and $u$ does not occur essentially in any rule of $\mathsf{G^g}$, then $\,  \vdash^n \Gamma [{}_{u}^{t}] \Rightarrow \Delta [{}_{u}^{t}]$.
 \end{lemma}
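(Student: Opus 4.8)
The plan is to argue by induction on the height $n$ of a derivation $\D$ of $\Gamma\Rightarrow\Delta$ in $\mathsf{G}+\mathsf{R^s}$, showing case by case that the substitution $[{}_{u}^{t}]$ can be pushed through the last rule of $\D$. When $u$ is a variable the statement is already clause (1) of Theorem~\ref{prop:structural_properties_logic} (valid in $\mathsf{G}+\mathsf{R^s}$ by Corollary~\ref{prop:structural_properties_singular_geometric}), so the genuinely new content is the case in which $u$ is an individual constant $c$; I will treat the two uniformly. Two standard facts will be used throughout: the pure-variable convention, and the elementary composition identity $(A[{}_{x}^{r}])[{}_{u}^{t}] = (A[{}_{u}^{t}])[{}_{x}^{\,r[{}_{u}^{t}]}]$, which holds whenever $x\neq u$ and $x\notin\mathsf{FV}(t)$ -- conditions met here, since $x$ ranges over bound variables, kept disjoint from the free variable or constant $u$ and from the free variables of $t$.

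If $n=0$, then $\Gamma\Rightarrow\Delta$ is an initial sequent; since $\top[{}_{u}^{t}]=\top$, $\bot[{}_{u}^{t}]=\bot$, and $P[{}_{u}^{t}]$ is again an atom occurring on both sides of the sequent exactly when $P$ did, $\Gamma[{}_{u}^{t}]\Rightarrow\Delta[{}_{u}^{t}]$ is again an initial sequent, which settles the base case. For the inductive step let $R$ be the last rule. If $R$ is a propositional rule, $[{}_{u}^{t}]$ commutes with $R$ verbatim: the side condition ``$t$ free for $u$'' descends to the premises (which only decompose the principal formula into its immediate subformulas), so the induction hypothesis applies to each premise and $R$ is reapplied. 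If $R$ is a quantifier rule the same works, with two provisos: for $L\forall$ and $R\exists$ one invokes the composition identity to recognise $(A[{}_{x}^{r}])[{}_{u}^{t}]$ as $(A[{}_{u}^{t}])[{}_{x}^{\,r[{}_{u}^{t}]}]$, so that the substituted premise is again an instance of the rule with minor term $r[{}_{u}^{t}]$; and for $R\forall$ and $L\exists$, if the eigenvariable $y$ lies in $\mathsf{FV}(t)$ one first renames it to a fresh variable inside the subderivation, which is height-preserving by clause (1) of Theorem~\ref{prop:structural_properties_logic}, and then proceeds as before.

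The remaining case is $R\in\mathsf{R^s}$, with conclusion $P_1,\dots,P_n,\Gamma'\Rightarrow\Delta$ and premises $\mathbf{Q}^*_i,P_1,\dots,P_n,\Gamma'\Rightarrow\Delta$. Here lies the one point that must genuinely be checked: since a singular geometric axiom contains no constants, every occurrence of a constant in this rule instance sits inside one of the terms substituted for the universally quantified variables $\bar{x}$ of the underlying scheme, so applying $[{}_{u}^{t}]$ to the $P_j$ and the $\mathbf{Q}^*_i$ -- again via the composition identity -- produces the principal and auxiliary atoms of another instance of the very same rule, with $\bar{x}$ now instantiated by the further-substituted terms (and the variables freshly introduced in the $\mathbf{Q}^*_i$ renamed away from $t$, exactly as eigenvariables are). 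If instead $u$ is a variable, then $u$ is free in the endsequent and hence, by the pure-variable convention, distinct from every variable freshly introduced in $\D$, so no rule ``involves $u$ essentially'' and the same argument applies; the clause excluding such rules is precisely what would be needed to block, e.g., a rule encoding Peano's axioms for $0$, where the constant occurs in the rule itself rather than via instantiation. Applying the induction hypothesis to the premises and then this instance of $R$, which has the same height as $\D$, yields $\vdash^{n}\Gamma[{}_{u}^{t}]\Rightarrow\Delta[{}_{u}^{t}]$. Beyond the constant-freeness point just made, the rest is routine bookkeeping -- keeping eigenvariables and the fresh variables of geometric rules disjoint from $t$, and making the composition-of-substitutions identity applicable -- all handled by the pure-variable convention and height-preserving renaming via Theorem~\ref{prop:structural_properties_logic}.
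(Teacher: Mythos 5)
Your proof is correct, but it is organized differently from the paper's. The paper does not induct on the derivation at all for the constant case: it observes that the derivation $\D$ of $\GSD$ can be viewed as a derivation $\D'$ of a sequent $\Gamma'\To\Delta'$ in which a fresh variable $z$ stands everywhere in place of the constant $u$, followed by the substitution $[{}_{z}^{u}]$; one then simply replaces that final substitution by $[{}_{z}^{t}]$, i.e.\ one reduces the constant case to the already-known height-preserving variable substitution of Theorem~\ref{prop:structural_properties_logic}.1. The hypothesis that no geometric rule involves $u$ essentially is invoked there exactly once, to guarantee that $\D'$ exists. Your direct induction on the height, pushing $[{}_{u}^{t}]$ through each rule, proves the same thing without the detour through a fresh variable, and in fact it supplies precisely the inductive verification that the paper leaves implicit in the phrase ``note that this is always feasible'': the rule-by-rule check that replacing the constant preserves initial sequents, commutes with the logical rules modulo the composition identity and eigenvariable renaming, and — the one non-routine point, which you correctly isolate — turns an instance of a constant-free singular geometric rule into another instance of the same rule. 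So your argument is somewhat longer but more self-contained; the paper's is shorter because it outsources all the bookkeeping to the variable-substitution lemma, at the price of asserting rather than proving the generalization-on-constants step. Both hinge on the same observation about constant-freeness of the rules in $\mathsf{R^s}$.
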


 \begin{proof}
 If $u$ is a variable,  the claim holds by extending Lemma \ref{sub} to $\mathsf{G^g}$. Otherwise, let $u$ be an individual constant. We can think of the derivation $\D$ of $\g\To\d$ as
 $$
 \infer[{[{}^{u}_{z}]}]{\g'[{}_{z}^{u}]\To\d'[{}^{u}_{z}]}{\g'\To\d'}
 $$
 where $\g'\To\d'$ is like $\g\To\d$ save that it has a fresh variable $z$ in place of $u$. Note that this is always feasible for purely logical derivations, and it is feasible for derivations involving geometric rules as long as these rules do not involve essentially the constant $u$. We transform $\D$ into
  $$
 \infer[{[{}^{t}_{z}]}]{\g'[{}_{z}^{t}]\To\d'[{}^{t}_{z}]}{\g'\To\d'}
 $$
 where $t$ is free for $z$ since we assumed it is free for $u$ in $\GSD$. We have thus found a derivation ($\D[{}_{u}^{t}]$) of $\Gamma [{}_{u}^{t}] \Rightarrow \Delta [{}_{u}^{t}]$ that has the same height as the derivation $\D$ of $\g\To\d$.
 \end{proof}
 
We can now show that Lemma \ref{inv} and Theorem \ref{prop:structural_properties_logic} still hold in $\mathsf{G^g}$. In fact, for $\mathsf{Gc^g}$ a proof has already been given in \cite{Negri2003}. 
 
 \begin{thm}[Negri]\label{prop:structural_properties_geometric_cl}
In $\mathsf{Gc^g}$ all the geometric and logical rules are height-preserving invertible. Moreover, weakening and contraction are height-preserving admissible and cut is admissible. 
  \end{thm}

 At this point we need to show that the same holds for $\mathsf{Gi}$. A similar result has been proved by Negri in \cite{Negri1999} for a subclass of geometric rules, called universal rules. In fact, Negri only considers specific instances of universal rules expressing  the axioms of the constructive theory of apartness and excess, see \S \ref{apart} and \S \ref{sectPPO}. Moreover, in \cite{Negri1999} only the quantifier-free version of $\mathsf{Gi}$ is considered. Here we extend Negri's result and show the admissibility of the structural rules for the full calculus $\mathsf{Gi}$ extended by arbitrary geometric rules. Then,  
 
  \begin{thm}\label{prop:structural_properties_geometric_in}
  In $\mathsf{Gi^g}$ all the geometric rules and all  logical rules, except $R\vee$, $L\to$ and $R\exists$, are height-preserving invertible. However, $L\to$ is height-preserving invertible with respect to its right premise. Moreover, weakening and contraction are height-preserving admissible and cut is admissible. 
 \end{thm}
 
 \begin{proof}
 
 The proof of height-preserving invertibility of the geometric and logical rules for $\mathsf{Gi^g}$ does not differ substantially from that for $\mathsf{Gi}$ and is left to the reader. We take a closer look at the admissibility of the structural rules. 
  
   
  \

  \emph{Weakening}.  To show that weakening is height-preserving admissible in $\mathsf{Gi^g}$, we need to extend the proof for $\mathsf{Gi}$ with the cases arising from geometric rules $R$. These cases be dealt with as for geometric rules over $\mathsf{m\textnormal{-}Gi}$ and $\mathsf{Gc}$ \cite[Thm. 2]{Negri2003}. In particular, if $R$ is an $m$-premises ($m\geq 1$) geometric rule with a variable condition on $y$, we replace $y$ with a fresh variable not occurring in the weakening formula, then we apply the inductive hypothesis and, finally, we apply $R$. If $R$ is an $m$-premises ($m\geq 1$) geometric rule without variable condition,  we can apply directly the inductive hypothesis and then $R$. Finally, if $R$ is a 0-premise geometric rule, the conclusion of weakening is obtained directly by $R$.

 \
 
 \emph{Contraction}. Once again, the new cases arising by the addition of geometric rules to $\mathsf{Gi}$ are similar to the cases in which these rules are added to $\mathsf{m\textnormal{-}Gi}$ or to $\mathsf{Gc}$ \cite[Thm. 4]{Negri2003}. This means we have three cases: of the occurrences of the contraction formula either (i) none, or (ii) exactly one, or (iii) both are principal in the final step of the derivation of the premise. The first case can be dealt with by induction, the second by inversion, and the third by the closure condition.   

 \
 
\emph{Cut}. To show that cut is admissible we need to prove that if $\, \vdash \Gamma \Rightarrow A$ and $\, \vdash A, \Delta \Rightarrow C$ then  $\, \vdash  \Gamma , \Delta \Rightarrow C$. The proof is by induction on the weight of the cut formula $A$ with a sub-induction on the sum of  heights of  derivation of the two premises (cut-height, for short). As for the proof of the admissibility of Cut over $\mathsf{m\textnormal{-}Gi^g}$ \cite[Thm. 5]{Negri2003}, we consider only the new cases arising from the geometric rules $R$.
\begin{enumerate}
\item The left premise of \emph{Cut} is by a 0-premise geometric rule $R$. Hence also the conclusion of \emph{Cut} is a conclusion of an instance of $R$.
\item The right premise is by a 0-premise geometric rule $R$ and the cut formula is not principal in it. We proceed as in case 1. 
\item The right premise is by an instance of a 0-premise geometric rule $R$ and the cut formula is  principal in it. In this case we know that $A$ is atomic (or $\top$ or $\bot$) and we consider the last step of the derivation of the left premise. If it is by a 0-premise (logic or geometric) rule or  it is an initial sequent, we proceed as in case 1.
\footnote{Observe that, unlike the cases of $\mathsf{m\textnormal{-}Gi^g}$ and $\mathsf{Gc^g}$, the cut formula $A$ must be principal in the left premise when this premise is an initial sequent.} If the left premise is by an $m$-premises ($m\geq 1$) logical or geometric rule, then  the cut formula is not principal in it  and we can permute the \emph{cut} upwards in the left premise (if the last rule applied in the left premise has \emph{eigenvariables}, we rename them before permuting the cut to avoid clashes). 
\item If the cut formula is not principal either in the left or in the right premise and this premise is by an $m$-premises  (for $m\geq 1$) geometric rule $R$, then, after having renamed any \emph{eigenvariable} of $R$ to avoid clashes, we permute the cut upwards with respect to this premise.
\item Finally, if the cut formula is principal in both premises,  neither premise has been derived by a geometric rule and we proceed as for $\mathsf{Gi}$.
\end{enumerate}
 \end{proof}

\section{Singular geometric theories}

To prove interpolation in extensions of first-order logic, the class of geometric rules seems too large. Thus, we restrict our attention to a proper sub-class of it and we introduce the class of singular geometric theories. In the next section we will prove (Lemma \ref{th:maehara_singular}) that Maehara's lemma holds for singular geometric extensions of first-order logic. 

A \emph{singular geometric axiom} is a geometric axiom with at most one non-logical predicate and no constant occurring essentially. 
A \emph{singular geometric theory} is a theory containing only singular geometric axioms. In sequent calculus a singular geometric theory can be formulated by extending $\mathsf{G}$ with finitely many geometric rules of form:
 $$
  \infer[\infrule{R}]{P_1 , \dots , P_n , \Gamma \Rightarrow \Delta}{\mathbf{Q}^*_1 , P_1 , \dots , P_n , \Gamma \Rightarrow \Delta & \cdots & \mathbf{Q}^*_m , P_1 , \dots , P_n , \Gamma \Rightarrow \Delta}
  $$
  where no constant occurs essentially and that satisfy the following singularity condition:

     \begin{equation}\tag{$\star$}
     |\mathsf{Rel} (\mathbf{Q}^*_1 , \dots , \mathbf{Q}^*_m , P_1 , \dots , P_n)| \leq 1 
     \end{equation}
 \noindent    Singular geometric axioms are ubiquitous in mathematics. Here, for example, is an incomplete list of singular geometric axioms for a binary relation $R$ (the list is partly taken from \cite[p. 48-50]{Casari2006}).
   
   \
   
   \begin{tabular}{ll}
    $R$ is reflexive & $\forall x (\top \to x R x)$\\
    $R$ is irreflexive & $\forall x (x R x \to \bot)$\\
    $R$ is transitive & $\forall x \forall y \forall z (x R y \wedge y R z \to x R z)$\\
    $R$ is intransitive & $\forall x \forall y \forall z (x R y \wedge y R z \wedge x R z \to \bot)$\\
    $R$ is co-transitive & $\forall x \forall y \forall z(x R y\to xR z \vee z Ry)$\\
    $R$ is splitting & $\forall x \forall y \forall z (x R y \to xR z \vee y R z)$\\
     $R$ is symmetric & $\forall x \forall y (x R y \to y R x)$\\
    $R$ is asymmetric & $\forall x \forall y (x R y \wedge y R x \to \bot)$\\
     \end{tabular}
    
    \begin{tabular}{ll}
    $R$ is anti-symmetric & $\forall x \forall y (x R y \wedge y R x \to x = y)$\\
    $R$ is trichotomy & $\forall x \forall y (\top \to x = y \vee x R y \vee y R x)$\\
    
    $R$ is linear & $\forall x \forall y (\top \to x R y \vee y R x)$\\
    $R$ is Euclidean & $\forall x \forall y \forall z (x R z \wedge y R z \to x R y)$\\
    $R$ is left-unique & $\forall x \forall y \forall z (x R z \wedge y R z \to x = y)$\\
  
   $R$ is right-unique & $\forall x \forall y \forall z (z R x \wedge z R y \to x = y)$\\
    $R$ is connected & $\forall x \forall y \forall z (x R y \wedge x R z \to y R z \vee z R y)$\\
    $R$ is nilpotent & $\forall x \forall y \forall z (x R z \wedge z R y \to \bot)$\\
    $R$ is a left ideal & $\forall x \forall y \forall z (x R y \to x R z)$\\
     
    $R$ is a right ideal & $\forall x \forall y \forall z (x R y \to z R y)$\\
    $R$ is rectangular & $\forall x \forall y \forall z \forall v (x R z \wedge v R y \to  x R y)$\\
    $R$ is dense & $\forall x \forall y (x R y \to \exists z (x R z \wedge z R y))$\\
    $R$ is total & $\forall x \exists y (\top \to x R y)$\\
    $R$ is confluent & $\forall x \forall y \forall z (x R y \wedge x R z \to \exists u (y R u \wedge z R u))$\\
    $R$ is left-oriented & $\forall x \forall y (\top \to \exists z (z R x \wedge z R y))$\\
    $R$ is right-oriented & $\forall x \forall y (\top \to \exists z (x R z \wedge y R z))$\\
   \end{tabular}

   \
  
  It is evident that a number of important classical and intuitionistic mathematical theories are singular geometric. Regarding the classical ones, the theory of partial orders ($R$ is reflexive, transitive and anti-symmetric), the theory of linear orders ($R$ is a linear partial order), as well as the theories of strict partial orders ($R$ is irreflexive and transitive) and strict linear orders ($R$ is a trichotomic strict partial order) are singular geometric. Constructive singular geometric theories, on the other hand, include von Plato's theories of positive partial orders \cite{vonPlato2001} ($R$ is irreflexive and co-transitive) and positive linear orders ($R$ is an asymmetric positive partial order), as well as the theory of apartness ($R$ is irreflexive and splitting). Also the theory of equivalence relations ($R$ is reflexive, transitive and symmetric) falls within the class of singular geometric. Finally, the fact the a relation $R$ is functional (total and right-unique) can be axiomatized using singular geometric axioms. Singular geometric axioms are important in logic, too. Specifically, the axioms of identity are singular geometric. 
   
   \
   
   \begin{tabular}{ll}
    $=$ is reflexive & $\forall x (x = x)$\\
    $=$ satisfies the indescernibility of identicals & $\forall x \forall y (x = y \wedge P [{}_{z}^{x}] \to P [{}_{z}^{y}])$\\    
   \end{tabular}

   \

   Notice that  the indiscernibility of identicals satisfies the singularity condition ($\star$)  because  identity is a logical predicate.  Hence, first-order logic with identity is a singular geometric theory.  
   
   Cut elimination for singular geometric rules clearly follows from cut elimination for geometric rules. More precisely, let $\mathsf{G^s}$ be any extension of $\mathsf{G}$ with singular geometric rules. Then:
   
    \begin{cor}\label{prop:structural_properties_singular_geometric}
All derivability properties expressed in Lemma \ref{th:substitution_general}, Theorem \ref{prop:structural_properties_geometric_cl} and Theorem \ref{prop:structural_properties_geometric_in} hold for $\mathsf{G^s}$.
 \end{cor}

 \begin{proof}
 Straightforward, since all singular geometric rules are geometric.  
 \end{proof}
  
 
  \section{Interpolation with singular geometric rules}
  
  The standard proof of interpolation in sequent calculi rests on a result due to Maehara which appeared (in Japanese) in \cite{Maehara1960} and was later made available to international readership by Takeuti in his \cite{Takeuti1987}. While interpolation is a result about logic, regardless the formal system (sequent calculus, natural deduction, axiom system, etc), Maehara's lemma is a \textquotedblleft sequent-calculus version\textquotedblright{} of interpolation. 
  Although originally Maehara proved his lemma for $\mathsf{LK}$, it is easy to adapt the proof so that it holds also in $\mathsf{G}$ (cf. \cite[\S 4.4]{TroelstraSchwichtenberg2000}). We recall from \cite{TroelstraSchwichtenberg2000} some basic definitions. 
  \begin{defn}[partition, split-interpolant]\label{partition}
  A \emph{partition of a sequent $\GSD$} is an expression $\Gamma_1 \prt \Gamma_2 \Rightarrow \Delta_1 \prt \Delta_2$, where $\Gamma = \Gamma_1 , \Gamma_2$ and $\Delta = \Delta_1 ,\Delta_2$ (for = the multiset-identity). 
  A \emph{split-interpolant} of a partition $\Gamma_1 \prt  \Gamma_2 \Rightarrow \Delta_1  \prt  \Delta_2$ is a formula $C$ such that:
 \begin{enumerate}
  \item[I] \qquad$\vdash \Gamma_1 \Rightarrow \Delta_1 , C$
  \item[II] \qquad$\vdash C , \Gamma_2 \Rightarrow \Delta_2$
  \item[III] \qquad$\mathcal{L}(C) \subseteq \mathcal{L} (\Gamma_1 , \Delta_1) \cap \mathcal{L} (\Gamma_2 , \Delta_2)$
 \end{enumerate} 
   We use $\Gamma_1 \prt \Gamma_2 \xRightarrow{C} \Delta_1 \prt \Delta_2$ to indicate that $C$ is a split-interpolant for $\Gamma_1 \prt \Gamma_2 \Rightarrow \Delta_1 \prt \Delta_2$. 
    \end{defn}
    Moreover, we say that a $C$ satisfying conditions (I) and (II) satisfies the derivability conditions for being a split-interpolant for the partition $\Gamma_1 \prt \Gamma_2 \Rightarrow \Delta_1 \prt \Delta_2$, whereas if $C$ satisfies (III) we say that it satisfies the language condition for being a split-interpolant for the same partition. 
  
 \begin{lemma}[Maehara]\label{th:maehara_logic}
 In $\mathsf{Gc}$ every partition  $
\Gamma_1\prt\Gamma_2\To\Delta_1\prt\Delta_2
 $ of a derivable sequent $\Gamma\To\Delta$ has a split-interpolant. In $\mathsf{Gi}$ every partition  $\Gamma_1\prt\Gamma_2\To\prt A $ of a derivable sequent $\Gamma\To A$ has a split-interpolant.
  
 \end{lemma}
 
The proof is by induction on the height $h$ of the derivation. If $h = 0$ then $\Gamma \Rightarrow \Delta$ is an initial sequent or a conclusion of a 0-premise rule and the proof is as in \cite{TroelstraSchwichtenberg2000}.\footnote{Notice, however, that the proof given in \cite{TroelstraSchwichtenberg2000} contains a misprint and the split-interpolant for the partition of the initial sequent $\Gamma_1 , P \prt \Gamma_2 \Rightarrow \Delta_1 , P \prt \Delta_2$ (their notation adjusted to ours) is $\bot$, and not $\bot \to \bot$ as stated in \cite[p.117]{TroelstraSchwichtenberg2000}.}  If $h=n+1$ one uses as induction hypothesis  the fact that any partition of the premises of a rule $R$ has a split-interpolant. For a detailed proof the reader is again referred to \cite{TroelstraSchwichtenberg2000}.

From Maehara's lemma it is immediate to prove Craig's interpolation theorem.
\begin{thm}[Craig]\label{Craig}
 If $A \Rightarrow B$ is derivable in $\mathsf{G}$ then there exists a $C$ such that $\vdash A \Rightarrow C$ and $\vdash C \Rightarrow B$ and $\mathcal{L}(C) \subseteq \mathcal{L} (A) \cap \mathcal{L} (B)$. 
\end{thm}
\begin{proof}
   Let $A \Rightarrow B$ be derivable in $\mathsf{G}$ and consider the partition $A \prt \varnothing \Rightarrow \varnothing \prt B$ of $A \Rightarrow B$. By Lemma \ref{th:maehara_logic}, this partition has a split-interpolant, namely there exists a $C$ such that $A \prt \varnothing \xRightarrow{C} \varnothing \prt B$. Hence $\vdash A \Rightarrow C$ and $\vdash C \Rightarrow B$ and $\mathcal{L}(C) \subseteq \mathcal{L} (A) \cap \mathcal{L} (B)$ by Definition \ref{partition}. 
  \end{proof}
Of any calculus for which Theorem \ref{Craig} holds, we say that it has the interpolation property. Now we extend Lemma \ref{th:maehara_logic} to extensions of $\mathsf{G}$ with singular geometric rules. 
 
In the proof of Lemma \ref{th:maehara_singular}  we shall only consider singular geometric rules where each $\mathbf{Q}^*_i$ is a single atom $Q^*_i$. More precisely, we consider singular geometric rules of the form 
$$
  \infer[\infrule{R}]{P_1 , \dots , P_n , \Gamma \Rightarrow \Delta}{Q_1^* , P_1 , \dots , P_n , \Gamma \Rightarrow \Delta & \cdots & Q_m^* , P_1 , \dots , P_n , \Gamma \Rightarrow \Delta}
$$
where $\Delta$ consists of exactly one formula in $\mathsf{Gi}$. This allows some notational simplification and will significantly improve the readability of the proof. It does not impair the generality of the result.  

 \begin{lemma}\label{th:maehara_singular}
 In $\mathsf{Gc^s}$ every partition $\Gamma_1 \prt \Gamma_2 \To \Delta_1 \prt \Delta_2$ of a derivable sequent $\Gamma\To\Delta$ has a split-interpolant. In $\mathsf{Gi^s}$ every partition $\Gamma_1 \prt \Gamma_2 \To  \prt A$ of a derivable sequent $\Gamma\To A$ has a split-interpolant.
 \end{lemma}

 \begin{proof}
 The proof extends that of Lemma \ref{th:maehara_logic}. Let $R$ be a singular geometric rule with $m$ premises and let $\Pi , \GSD$ be its conclusion, where $\Pi$ is the multiset  $P_1,\dots,P_n$ of the atomic principal formulas of $R$, if any. We consider the following generic partition of the conclusion:
 $$
 \Pi_1,\Gamma_1\prt\Pi_2,\Gamma_2\To\Delta_1\prt\Delta_2
 $$
 where $\Pi_1,\Pi_2=\Pi$ and $\Gamma_1,\Gamma_2=\Gamma$ and $\Delta_1,\Delta_2=\Delta$, and where $\d_1=\varnothing$ and $\d_2=A$ for $\mathsf{Gi^s}$. Moreover, let $\Theta$ be the multiset $Q_1^*,\dots,Q_m^*$ of active formulas of $R$, if any. We organize the proof in three exhaustive cases:
  \begin{enumerate}
\item\label{case1} $\mathsf{Rel}(\Theta,\Pi)\subseteq\mathsf{Rel}(\Pi_1,\g_1,\d_1)$;
\item\label{case2} $\mathsf{Rel}(\Theta,\Pi)\subseteq\mathsf{Rel}(\Pi_2,\g_2,\d_2)$;
\item\label{case3} $\mathsf{Rel}(\Theta,\Pi)\not\subseteq\mathsf{Rel}(\Pi,\g,\d)$.
 \end{enumerate}
Observe that these three cases are exhaustive since singular geometric rules have at most one non-logical predicate in their principal and active  formulas and, therefore,  when Case 3 does not hold at least one of Cases 1 and 2 holds. 
We give a proof of the three  cases for $\mathsf{Gc}$, and then we show the modifications needed for $\mathsf{Gi}$.

\

\emph{Case \ref{case1} for $\mathsf{Gc^s}$}. If $R$ is an $m$-premise(s) rule for $m\geq1$, then  by the inductive hypothesis (IH) every partition of each of the $m$ premises of $R$ has a split-interpolant. In particular,   for each $k\in\{1,\dots,m\}$, there is a $C_k$  such that:
 \begin{itemize}
 \item[(I$_k$)] $\vdash  Q^*_k ,
 \Pi_1,\Pi_2, \Gamma_1 \Rightarrow \Delta_1 , C_k $
  \item[(II$_k$)] $\vdash  C_k , \Gamma_2 \Rightarrow \Delta_2 $
  \item[(III$_k$)] $\L (C_k) \subseteq \L (Q^*_k , 
  \Pi_1,\Pi_2, \Gamma_1 , \Delta_1) \cap \L (\Gamma_2 , \Delta_2)$
   \end{itemize}
   If, instead, $R$ is a 0-premise rule then (I$_1$), (II$_1$), and (III$_1$) hold trivially for $C_1\equiv\bot$.
   
We start by assuming that $\Pi_2$ is the non-empty multiset $P_{i_{j+1}} , \dots , P_{i_{n}}$, and then we show the modifications needed when $\Pi_2=\varnothing$. Consider now the following derivation $\mathcal{D}_1$, where the topmost sequents are derivable by (I$_1$)\,-\,(I$_m$):
\begin{small}
 \begin{equation}\label{firstcase1}
 \infer[\infrule{R\to}]{\Pi_1,\Gamma_1\Rightarrow\Delta_1, \bigwedge\Pi_2 \rightarrow \bigvee_{i=1}^m C_i}{
\infer=[\infrule{L\wedge}]{\bigwedge\Pi_2,\Pi_1,\Gamma_1\Rightarrow\Delta_1,  \bigvee_{i=1}^m C_i
}
{\infer[\infrule R]{\Pi_2,\Pi_1,\Gamma_1\Rightarrow\Delta_1,  \bigvee_{i=1}^m C_i
}{
\infer=[\infrule R\lor]{Q_1^*,\Pi_2,\Pi_1,\Gamma_1\Rightarrow\Delta_1,  \bigvee_{i=1}^m C_i}{
\infer=[\infrule Wkn]{Q_1^*,\Pi_2,\Pi_1,\Gamma_1\Rightarrow\Delta_1,C_1,\dots,C_m}{Q_1^*,\Pi_2,\Pi_1,\Gamma_1\Rightarrow\Delta_1,C_1}}&
\deduce{\phantom{a}}{\dots}&
\infer=[\infrule R\lor ]{Q_m^*,\Pi_2,\Pi_1,\Gamma_1\Rightarrow\Delta_1,  \bigvee_{i=1}^m C_i}{
\infer=[\infrule Wkn]{Q_m^*,\Pi_2,\Pi_1,\Gamma_1\Rightarrow\Delta_1,C_1,\dots,C_m}{Q_m^*,\Pi_2,\Pi_1,\Gamma_1\Rightarrow\Delta_1,C_m}}
}
}
}
\end{equation}
\end{small}
Notice that the application of $R$ is legitimate because by assumption $R$ is applicable to $Q_i^*,\Pi,\Gamma\To\Delta$ and none of the \emph{eigenvariables} of the $Q_i^*$'s can occur free in some $C_k$, since $\mathcal L(C_k)\subseteq\mathcal L(\Gamma_2,\d_2)$.  Notice also that in some particular case the double-line stands for the empty sequence of instances,  e.g.,  the steps by $R\lor$ when $R$ is a 0- or 1-premise rule.

Consider another derivation $\mathcal{D}_2$, where the left-topmost sequents are initial sequents since $\Pi_2=P_{i_{j+1}} , \dots , P_{i_{n}}$ and the right-topmost ones are derivable by (II$_1$)--(II$_m$):
{\small 
\begin{equation}\label{secondcase1}
 \infer[\infrule{L\to}]{
 \bigwedge\Pi_2 \to  \bigvee_{i=1}^m C_i ,  \Pi_2 ,\Gamma_2\Rightarrow\Delta_2
 }
 {
  \infer=[\infrule{R\wedge}]
  {
  \Pi_2 ,\Gamma_2\Rightarrow\Delta_2 , \bigwedge\Pi_2
  }
 {
  \Pi_2,\Gamma_2\Rightarrow\Delta_2 , P_{i_{j+1}} & \cdots & \Pi_2,\Gamma_2\Rightarrow\Delta_2 , P_{i_{n}}
 }
&
\infer=[\infrule{Wkn}]
{
 \bigvee_{i=1}^m C_i ,  \Pi_2 ,\Gamma_2\Rightarrow\Delta_2
}
{
\infer=[\infrule{L\vee}]
{
 \bigvee_{i=1}^m C_i ,  \Gamma_2\Rightarrow\Delta_2
}
{
C_1 ,  \Gamma_2\Rightarrow\Delta_2
&
\cdots
&
C_m ,  \Gamma_2\Rightarrow\Delta_2
}
}
 }
 \end{equation}}
 
 When $\Pi_2=\varnothing$ we modify $\mathcal{D}_1$ by using left weakening   instead of $L\wedge$ to add $\bigwedge\Pi_2$ --i.e., $\top$ -- to the antecedent, and we modify $\mathcal{D}_2$ by deriving the conclusion of $R\wedge$ by an instance of $R\top$ instead of by instances of $R\wedge$.

 Let $t_1 , \dots , t_\ell$ be all terms such that $t_1 , \dots , t_\ell \in \mathsf{Ter} (\bigwedge\Pi_2\to  \bigvee_{i=1}^m C_i)$ and $(\bullet)\;t_1 , \dots , t_\ell \notin \mathsf{Ter}(\Pi_1 , \Gamma_1 , \Delta_1) \cap \mathsf{Ter} (\Pi_2 , \Gamma_2 , \Delta_2)$. We use $\overline{t}$ to denote $t_1,\dots,t_\ell$. We show that 
 $$
 (\ddag)\qquad t_1 , \dots , t_\ell \notin \mathsf{Ter} ( \Pi_1 , \Gamma_1 , \Delta_1)
 $$
 For each $k\leq m$, (III$_k$) entails that $\mathsf{Ter}(C_k)\subseteq \mathsf{Ter}(\Gamma_2,\Delta_2)$. Hence  $\mathsf{Ter}(\bigwedge\Pi_2\to  \bigvee_{i=1}^m C_i)\subseteq \mathsf{Ter}(\Pi_2,\Gamma_2,\Delta_2)$. By this and ($\bullet$) we immediately get that $(\ddag)$ holds.
%

Let now $\bar{z}$ be  variables $z_1 , \dots , z_\ell$ not occurring in $\mathcal{D}_1$ and $\mathcal{D}_2$. Lemma \ref{th:substitution_general} applied to  $\mathcal D_1$ shows that:
$$
 \vdash \Pi_1, \Gamma_1 \Rightarrow \Delta_1 , ( \bigwedge\Pi_2\to  \bigvee_{i=1}^m C_i ) [\,{}_{\bar{t}}^{\bar{z}}\,]
$$
Here $(\ddag$) ensures that the substitution $[\,{}_{\bar{t}}^{\bar{z}}\,]$ has no effect on $\Pi_1, \Gamma_1,\Delta_1$. By $\ell$ applications of $R\forall$ to the derivable sequent above we obtain:
$$
\textnormal{(I}_C) \qquad \vdash \Pi_1 , \Gamma_1 \Rightarrow \Delta_1 , \forall \bar{z}(( \bigwedge\Pi_2\to  \bigvee_{i=1}^m C_i ) [\,{}_{\bar{t}}^{\bar{z}}\,])
$$
Moreover, by applying $\ell$ instances of left weakening 
and then $\ell$ instances of $L\forall$ to the conclusion of $\mathcal D_2$ we obtain:

%
$$\textnormal{(II}_C) \qquad\vdash \forall \bar{z}(( \bigwedge\Pi_2\to  \bigvee_{i=1}^m C_i )[\,{}_{\bar{t}}^{\bar{z}}\,]) ,\Pi_2
 , \Gamma_2   \Rightarrow \Delta_2 
$$
%
%
Let $C$ be $\forall \bar{z}(( \bigwedge\Pi_2\to  \bigvee_{i=1}^m C_i )[\,{}_{\bar{t}}^{\bar{z}}\,])$. By (I$_C$) and (II$_C$), we have established that $C$ satisfies the derivability conditions for being a split-interpolant of the given partition. We now show that it also satisfies the language condition, namely:
$$
\textnormal{(III}_C) \quad \L (C) \subseteq \L (\Pi_1, \Gamma_1, \Delta_1) \cap \L(\Pi_2 , \Gamma_2, \Delta_2)
$$
First, if $s$ is a term in $\mathsf{Ter} (C)$, it is a term occurring in  $\bigwedge\Pi_2\to  \bigvee_{i=1}^m C_i$ that is not in the list $\bar{t}$. By $(\bullet)$, we have: 
 $$\text{(III}.1_C)\quad s \in \mathsf{Ter}(\Pi_1 , \Gamma_1,\Delta_1)\cap\mathsf{Ter}(\Pi_2 ,\Gamma_2,\Delta_2)$$ 
 Next, we show that: 
$$(\textnormal{III}.2_C)\qquad\mathsf{Rel}(C)\subseteq\mathsf{Rel}(\Pi_1,\Gamma_1,\Delta_1)\cap\mathsf{Rel}(\Pi_2,\Gamma_2,\Delta_2)
$$
By assumption,  we are in Case \ref{case1}, i.e., $ \mathsf{Rel}(\Theta,\Pi)\subseteq\mathsf{Rel}(\Pi_1,\g_1,\d_1)$. The following set-theoretic reasoning shows that ($\textnormal{III}.2_C$) holds:

$\begin{array}{lc}
\mathsf{Rel}(C)&\stackrel{\textnormal{III}_k}\subseteq\\\noalign{\medskip}
\mathsf{Rel}(\Pi_2)\cup(\mathsf{Rel}(\Theta ,   \Pi_1,\Pi_2, \Gamma_1 , \Delta_1) \cap \mathsf{Rel} (\Gamma_2 , \Delta_2))&\stackrel{\textnormal{distrib.}}=\\\noalign{\medskip}
\mathsf{Rel}(\Theta,  \Pi_1,\Pi_2, \Gamma_1 , \Delta_1) \cap \mathsf{Rel} (\Pi_2,\Gamma_2 , \Delta_2)&\stackrel{\text{Case }\ref{case1}}=\\\noalign{\medskip}
 \mathsf{Rel}( \Pi_1, \Gamma_1 , \Delta_1) \cap \mathsf{Rel} (\Pi_2,\Gamma_2 , \Delta_2)
\end{array}$

\

%
We conclude that: 
$$
\framebox{
    \infer
  {
   \Pi_1 , \Gamma_1 \prt \Pi_2 , \Gamma_2 \xRightarrow{\forall \bar{z} (( \bigwedge\Pi_2\to\bigvee_{i=1}^mC_i)[\,{}_{\bar{t}}^{\bar{z}}\,])} \Delta_1 \prt \Delta_2
  }
  {
   Q^*_1 ,  \Pi_1 , \Pi_2 , \Gamma_1 \prt  \Gamma_2 \xRightarrow{C_1} \Delta_1 \prt \Delta_2
   &
   \cdots
   &
    Q^*_m ,  \Pi_1 , \Pi_2 ,   \Gamma_1 \prt \Gamma_2 \xRightarrow{C_m} \Delta_1 \prt \Delta_2
  }
}
  $$
Observe that when $\Pi_2=\varnothing$ the split-interpolant of the conclusion can be simplified as follows:
$$
\framebox{
\deduce{\forall \bar{z} (( \bigvee_{i=1}^mC_i)[\,{}_{\bar{t}}^{\bar{z}}\,])}{}
}
$$

\

\emph{Case \ref{case2} for $\mathsf{Gc^s}$}. The proof differs substantially from that of Case \ref{case1} only as far as the derivability conditions are concerned. Thus, we give a detailed analysis of these and leave to the reader the task to check that also the language condition is satisfied. By IH every partition of each premise of an $m$-premises  ($m\geq 1$) rule $R$  has a split-interpolant. In particular, for all $k\in\{1,\dots,m\}$, there is a $C_k$ such that:
  
 \begin{itemize}
 \item[(I$_k$)] $\vdash \Gamma_1 \Rightarrow \Delta_1 , C_k $
  \item[(II$_k$)] $\vdash  C_k ,  Q^*_k ,\Pi_1,\Pi_2
  , \Gamma_2 \Rightarrow \Delta_2 $
  \item[(III$_k$)] $\L (C_k) \subseteq \L (\Gamma_1 , \Delta_1) \cap \L (Q^*_k , \Pi_1,\Pi_2
   , \Gamma_2 , \Delta_2)$
 
   \end{itemize}
In case $R$ is a 0-premise rule, (I$_1$), (II$_1$), and (III$_1$) hold by imposing $C_1\equiv\top$.
   
 Let $\mathcal{D}_1$ be the following derivation, where the topmost sequents are  derivable by (II$_1$)\,-\,(II$_m$): 
\begin{small}
 \begin{equation}\label{firstcase2}
  \infer=[\infrule{L\wedge}]
  {
  \bigwedge_{i=1}^m C_i\wedge\bigwedge\Pi_1, \Pi_2 , \Gamma_2 \Rightarrow \Delta_2
  }
  {
  \infer[\infrule{R}]
  {
   C_1 , \dots , C_m , \Pi_1  , \Pi_2 , \Gamma_2 \Rightarrow \Delta_2
  }
  {
   \infer=[\infrule{Wkn}]
   {
  C_1 ,  \dots , C_m , Q_1^*  , \Pi_1 , \Pi_2 , \Gamma_2 \Rightarrow \Delta_2
   }
   {
  C_1 , Q_1^*   ,  \Pi_1 , \Pi_2 , \Gamma_2 \Rightarrow \Delta_2
    }
  &
  \cdots
  &
  \infer=[\infrule{Wkn}]
  {
   C_1 , \dots , C_m , Q_m^* , \Pi_1 , \Pi_2 , \Gamma_2 \Rightarrow \Delta_2
  }
  {
 C_m , Q_m^* ,  \Pi_1 , \Pi_2 , \Gamma_2 \Rightarrow \Delta_2
  }
  }
  }
  \end{equation}
  \end{small}
Consider now another derivation $\mathcal{D}_2$ where the left topmost sequents are derivable by (I$_1$)\,-\,(I$_m$) and the right ones are initial sequents (we take  $P_{i_1},\dots,P_{i_j}= \Pi_1$ if $\Pi_1\neq\varnothing$, else, as we did in (\ref{secondcase1}), we derive the conclusion of the right top-most instance(s) of $R\wedge$ by $R\top$):
%

 {\small
 \begin{equation}\label{secondcase2}
  \infer[\infrule{R\wedge}]
  {
   \Pi_1 , \Gamma_1 \Rightarrow \Delta_1 ,   \bigwedge_{i=1}^m C_i\wedge\bigwedge\Pi_1
  }
  {
 \infer=[\infrule{Wkn}]
 {
 \Pi_1 , \Gamma_1 \Rightarrow \Delta_1 , \bigwedge_{i=1}^m C_i
 }
 {
 \infer=[\infrule{R\wedge}]
 {
 \Gamma_1 \Rightarrow \Delta_1 , \bigwedge_{i=1}^m C_i
 }
 {
 \Gamma_1 \Rightarrow \Delta_1 , C_1 
 &
 \cdots
 &
 \Gamma_1 \Rightarrow \Delta_1 , C_m
 }
 }
   &
\infer=[\infrule{R\wedge}]{  \Pi_1 , \Gamma_1 \Rightarrow \Delta_1 , \bigwedge\Pi_1}
{\Pi_1 , \Gamma_1 \Rightarrow \Delta_1 , P_{i_{1}} & \cdots & \Pi_1 , \Gamma_1 \Rightarrow \Delta_1 , P_{i_{j}}}
  }
  \end{equation}  }
%
 Let $\bar{t}$ be  all terms $t_1 , \dots , t_\ell$ such that $t_1 , \dots , t_\ell \in \mathsf{Ter}( \bigwedge_{i=1}^m C_i\wedge\bigwedge\Pi_1)$ and $t_1 , \dots , t_\ell \notin \mathsf{Ter} (\Pi_1 , \Gamma_1 , \Delta_1) \cap \mathsf{Ter} (\Pi_2 , \Gamma_2 , \Delta_2)$. As in the previous case, it is easy to show that: 
 $$
 (\ddag)\qquad t_1 , \dots , t_\ell \notin \mathsf{Ter}(\Pi_2,\Gamma_2,\Delta_2)
 $$
 Moreover let $\bar{z}$ be variables $z_1 , \dots , z_\ell$ new to $\mathcal{D}_1$ and $\mathcal{D}_2$. We reason analogously to  the previous case to obtain:
 $$
 \textnormal{(I}_C) \qquad \vdash\Pi_1 , \Gamma_1 \Rightarrow \Delta_1 , \exists \bar{z}((\bigwedge_{i=1}^m C_i\wedge\bigwedge\Pi_1)[\,{}_{\bar{t}}^{\bar{z}}\,]) 
 $$
 As above, thanks to ($\ddag$), we also obtain:
 $$
 \textnormal{(II}_C) \qquad \vdash \exists \bar{z}((\bigwedge_{i=1}^m C_i\wedge\bigwedge\Pi_1)[\,{}_{\bar{t}}^{\bar{z}}\,]),\Pi_2 , \Gamma_2 \Rightarrow \Delta_2
 $$
  Let $C$ be $\exists \bar{z}((\bigwedge_{i=1}^m C_i\wedge\bigwedge\Pi_1)[\,{}_{\bar{t}}^{\bar{z}}\,])$. Given that $\mathsf{Rel}(\Theta,\Pi)\subseteq\mathsf{Rel}(\Pi_2,\g_2,\d_2)$,  and given that we have quantified away all terms in $\bar{t}$, we have:  
$$
\textnormal{(III}_C) \qquad \L (C) \subseteq \L (\Pi_1, \Gamma_1 , \Delta_1) \cap \L (\Pi_2, \Gamma_2 , \Delta_2)
$$
We conclude that $C$ is a split-interpolant of the given partition.  
  $$
\framebox{
  \infer
  {
   \Pi_1 , \Gamma_1 \prt \Pi_2 , \Gamma_2 \xRightarrow{\exists \bar{z}((\bigwedge_{i=1}^m C_i\wedge\bigwedge\Pi_1)[\,{}_{\bar{t}}^{\bar{z}}\,])} \Delta_1 \prt \Delta_2
  }
  {
   \Gamma_1 \prt Q^*_1 ,  \Pi_1 , \Pi_2 , \Gamma_2 \xRightarrow{C_1} \Delta_1 \prt \Delta_2
   &
   \dots
   &
    \Gamma_1 \prt Q^*_m ,  \Pi_1 , \Pi_2  ,  \Gamma_2 \xRightarrow{C_m} \Delta_1 \prt \Delta_2
  }
}
$$
As for the previous case, when $\Pi_1=\varnothing$ we  have a simpler split-interpolant of the conclusion:
$$
\framebox{
\deduce{\exists \bar{z}((\bigwedge_{i=1}^m C_i)[\,{}_{\bar{t}}^{\bar{z}}\,])}{}
}
$$

\
\emph{Case \ref{case3} for $\mathsf{Gc^s}$.} We can proceed either as in Case \ref{case1} or as in Case \ref{case2}.
If we proceed as in Case \ref{case1}, we obtain the following split-interpolant:

$$
\framebox{
    \infer
  {
   \Pi_1 , \Gamma_1 \prt \Pi_2 , \Gamma_2 \xRightarrow{\forall \bar{z} (( \bigwedge\Pi_2\to\bigvee_{i=1}^mC_i)[\,{}_{\bar{t}}^{\bar{z}}\,])} \Delta_1 \prt \Delta_2
  }
  {
   Q^*_1 ,  \Pi_1 , \Pi_2 , \Gamma_1 \prt  \Gamma_2 \xRightarrow{C_1} \Delta_1 \prt \Delta_2
   &
   \cdots
   &
    Q^*_m ,  \Pi_1 , \Pi_2 ,   \Gamma_1 \prt \Gamma_2 \xRightarrow{C_m} \Delta_1 \prt \Delta_2
  }
}
  $$
  
  The proof that the formula $C$ presented above is the split-interpolant of the conclusion is exactly as for Case \ref{case1}, save for the relational part (III.2$_C$) of the language condition. In this case we are assuming that $\mathsf{Rel}(\Theta,\Pi)\not\subseteq\mathsf{Rel}(\Pi,\g,\d)$. This  immediately implies 
  $$
  (+)\quad \mathsf{Rel}(\Pi_1,\Pi_2)=\varnothing
  $$
  and, together with the fact that $|\mathsf{Rel}(\Theta)|\leq 1$, it implies
  $$
  (++)\quad  \mathsf{Rel}(\Theta)\cap\mathsf{Rel}(\Pi_2,\g_2,\d_2)=\varnothing
  $$
  Hence, we can show that (III.2$_C$) holds via the following set-theoretic reasoning
  
  $\begin{array}{lc}
\mathsf{Rel}(C)&\stackrel{\textnormal{III}_k}\subseteq\\\noalign{\medskip}
\mathsf{Rel}(\Pi_2)\cup(\mathsf{Rel}(\Theta ,   \Pi_1,\Pi_2, \Gamma_1 , \Delta_1) \cap \mathsf{Rel} (\Gamma_2 , \Delta_2))&\stackrel{\textnormal{distrib.}}=\\\noalign{\medskip}
\mathsf{Rel}(\Theta ,  \Pi_1,\Pi_2, \Gamma_1 , \Delta_1) \cap \mathsf{Rel} (\Pi_2,\Gamma_2 , \Delta_2)&\stackrel{(+),(++)}=\\\noalign{\medskip}
 \mathsf{Rel}(  \Gamma_1 , \Delta_1) \cap \mathsf{Rel} (\Gamma_2 , \Delta_2)
\end{array}$


\

\emph{Case \ref{case1}  for $\mathsf{Gi^s}$}. The proof is the same as for Case \ref{case1} in $\mathsf{Gc^s}$ (with $\Delta_1=\varnothing$ and $\Delta_2= A$) save for the derivations $\D_1$  and $\D_2$  presented in (\ref{firstcase1}) and (\ref{secondcase1}) that are not  $\mathsf{Gi^s}$-derivations. It is immediate to see that we can obtain a $\mathsf{Gi^s}$-derivation from the derivation in (\ref{firstcase1}) by simply omitting the instances of weakening and applying directly instances of $R\lor$ to the leaves. On the other hand, the derivation $\D_2$ presented in (\ref{secondcase1}) becomes a  $\mathsf{Gi^s}$-derivation by simply dropping the singleton multiset  $\Delta_2$ from the left top-most sequents and then adding an instance of weakening on the left premise of $L\to$.

\

\emph{Case \ref{case2} for $\mathsf{Gi^s}$}. The proof is the same as for Case \ref{case2} in $\mathsf{Gc^s}$, since the derivations presented in (\ref{firstcase2}) and (\ref{secondcase2}) are $\mathsf{Gi^s}$-derivation when $\Delta_1=\varnothing$ and $\Delta_2=A$.

\

\emph{Case \ref{case3}  for $\mathsf{Gi^s}$}. We may proceed  as for  Case \ref{case1} for $\mathsf{Gi^s}$ save for the relational part (III.2$_C$) of the language condition  where we reason as in Case  \ref{case3} for $\mathsf{Gc^s}$.
  \end{proof}
       
From Lemma \ref{th:maehara_singular} it is immediate to conclude that singular geometric extensions of classical and intuitionistic logic satisfy the interpolation theorem, namely:
 \begin{thm}\label{CraigSingular}
 $\mathsf{G^s}$ has the interpolation property. 
 \end{thm}       

\section{Applications}
We now consider some corollaries of Theorem \ref{CraigSingular} in which the strategy for building interpolants provided in Lemma \ref{th:maehara_singular} is applied. Notice that in  the theories  considered in this section all contracted instances  are  admissible and, hence, we can ignore them, see the discussion after Definition \ref{closurecondition}.


\subsection{First-order logic with identity}

We start with first-order logic with identity. Recall that a cut-free calculus for classical first-order logic with identity has been presented in \cite{NegrivonPlato1998} by adding on top of $\mathsf{Gc}$ the rules \emph{Ref}$_=$ and \emph{Repl}$_=$ corresponding to the reflexivity of $=$ and Leibniz's principle of indescernibility of identicals, respectively. In intuitionistic theories, on the other hand, identity is often treated differently and we will provide a constructively more acceptable treatment of identity later in dealing with apartness. In general, however, nothing prevents us from building intuitionistic first-order logic with identity in a parallel fashion to the classical case. This is, for example, the route taken in \cite{TroelstraSchwichtenberg2000} and we will follow suit. More specifically, let $\mathsf{G}^=$ be $\mathsf{G} + \{ \textnormal{\emph{Ref}}_= , \textnormal{\emph{Repl}}_=\}$.
Notice that, since \emph{Ref}$_=$ and \emph{Repl}$_=$ are geometric rules, cut elimination holds in $\mathsf{Gi}^=$ in virtue of Theorem \ref{prop:structural_properties_geometric_in}. Moreover, since they are also singular geometric, it follows from our Theorem \ref{CraigSingular} that in $\mathsf{G}^=$ the interpolation property holds, i.e.
 \begin{cor}
  $\mathsf{G}^=$ has the interpolation property. 
 \end{cor}
\begin{proof}
We determine the split-interpolants as applications of the procedures given in the proof of Lemma \ref{th:maehara_singular}. The rule \emph{Ref}$_=$ can be treated as  an instance of Case \ref{case1} with $\Pi_2=\varnothing$ (obviously, it could also have been treated as an instance of Case \ref{case2}). Depending on whether both $c\in\mathsf{Ter}(C)$ and   $c\not\in\mathsf{Ter} (\Gamma_1 , \Delta_1)$ or  not, we have then, respectively: 
$$
\framebox{
  \infer
  {
    \Gamma_1 \prt  \Gamma_2 \xRightarrow{\forall z(C[{}_{s}^{z}])} \Delta_1 \prt \Delta_2
  }
  {
  s=s, \Gamma_1 \prt  \Gamma_2 \xRightarrow{C} \Delta_1 \prt \Delta_2
  }
  \qquad
   \infer
  {
    \Gamma_1 \prt  \Gamma_2 \xRightarrow{C} \Delta_1 \prt \Delta_2
  }
  {
  s=s, \Gamma_1 \prt  \Gamma_2 \xRightarrow{C} \Delta_1 \prt \Delta_2
  }
  }
  $$
For \emph{Repl}$_=$, there are four possible partitions of the conclusion: 
\begin{itemize}
 \item $s = t , P [{}_{x}^{s}] , \Gamma_1 \prt \Gamma_2 \Rightarrow \Delta_1 \prt \Delta_2$
\item $\Gamma_1 \prt s = t , P [{}_{x}^{s}] , \Gamma_2 \Rightarrow \Delta_1 \prt \Delta_2$
\item $P [{}_{x}^{s}] , \Gamma_1 \prt s = t , \Gamma_2 \Rightarrow \Delta_1 \prt \Delta_2$
\item $s = t , \Gamma_1 \prt P [{}_{x}^{s}] , \Gamma_2 \Rightarrow \Delta_1 \prt \Delta_2$
\end{itemize}
Accordingly, we need to consider four sub-cases. As in Case \ref{case1} of Lemma \ref{th:maehara_singular}, when $\Pi_2=\varnothing$, the interpolant for the first partition is as follows:
$$\framebox{
  \infer{s=t,P[{}_{x}^{s}], \Gamma_1 \prt \Gamma_2 \xRightarrow{C} \Delta_1 \prt \Delta_2 }{P[{}_{x}^{t}],s=t,P[{}_{x}^{s}], \Gamma_1 \prt \Gamma_2 \xRightarrow{C} \Delta_1 \prt \Delta_2}
  }$$
The interpolant for the second partition is obtained by reasoning as in Case \ref{case2} with $\Pi_1=\varnothing$ of Lemma \ref{th:maehara_singular}:
$$\framebox{
  \infer{\Gamma_1 \prt s=t,P[{}_{x}^{s}],\Gamma_2 \xRightarrow{C} \Delta_1 \prt \Delta_2 }{\Gamma_1 \prt P[{}_{x}^{t}],s=t,P[{}_{x}^{s}], \Gamma_2 \xRightarrow{C} \Delta_1 \prt \Delta_2}
  }$$
The interpolant for the third partition is found as in Case \ref{case1} of Lemma \ref{th:maehara_singular}, depending on whether $t\in\mathsf{Ter} (P[{}_{x}^{s}] , \Gamma_1 , \Delta_1)$ (left derivation in the box below) or  not (right derivation in the box below). 
    \begin{small}   $$
\framebox{
    \infer
  {
 P[{}_{x}^{s}] , \Gamma_1 \prt s=t , \Gamma_2 \xRightarrow{s = t  \to C} \Delta_1 \prt \Delta_2
  }
  {
   P[{}_{x}^{t}],s=t,P[{}_{x}^{s}] , \Gamma_1 \prt  \Gamma_2 \xRightarrow{C} \Delta_1 \prt \Delta_2
  }
 \quad   
   \infer
  {
 P[{}_{x}^{s}] , \Gamma_1 \prt s=t , \Gamma_2 \xRightarrow{\forall z (s = z \to C[{}_{t}^{z}])} \Delta_1 \prt \Delta_2
  }
  {
   P[{}_{x}^{t}],s=t,P[{}_{x}^{s}] , \Gamma_1 \prt  \Gamma_2 \xRightarrow{C} \Delta_1 \prt \Delta_2
  }
}$$\end{small}
  Lastly, the interpolant for the fourth partition is found as in Case \ref{case2} of Lemma \ref{th:maehara_singular},  depending on whether $t\in\mathsf{Ter} (P[{}_{x}^{s}] , \Gamma_2 , \Delta_2)$ or not:
  \begin{small}
  $$
\framebox{
 \infer
  {
s=t , \Gamma_1 \prt  P[{}_{x}^{s}], \Gamma_2 \xRightarrow{s = t  \wedge C} \Delta_1 \prt \Delta_2
  }
  {
   \Gamma_1 \prt   P[{}_{x}^{t}],s=t,P[{}_{x}^{s}] ,\Gamma_2 \xRightarrow{C} \Delta_1 \prt \Delta_2
  }
 \quad   \infer
  {
s=t , \Gamma_1 \prt  P[{}_{x}^{s}], \Gamma_2 \xRightarrow{\exists z (s = z \wedge C[{}_{t}^{z}])} \Delta_1 \prt \Delta_2
  }
  {
   \Gamma_1 \prt   P[{}_{x}^{t}],s=t,P[{}_{x}^{s}] ,\Gamma_2 \xRightarrow{C} \Delta_1 \prt \Delta_2
  }
  }
  $$
  \end{small}
   \end{proof}

\subsection{Equivalence relations}\label{EQ}

In a perfectly parallel fashion, we obtain the theory of equivalence relations by adding to $\mathsf{G}$ the rules corresponding to the reflexivity, transitivity and symmetry of a binary relation $\sim$. Thus, $\mathsf{EQ} = \mathsf{G} + \{\textnormal{\emph{Ref}}_\sim \, , \, \textnormal{\emph{Trans}}_\sim \, , \, \textnormal{\emph{Sym}}_\sim\}$. 

\begin{center}
 \begin{tabular}{cc}
$
\infer[\infrule{Ref_\sim}]{\Gamma \Rightarrow \Delta}{s \sim s , \Gamma \Rightarrow \Delta}
$
&
$
 \infer[\infrule{Trans_\sim}]{s \sim t , t \sim u , \Gamma \xRightarrow{} \Delta}{s \sim u , s \sim t , t \sim u , \Gamma \xRightarrow{} \Delta}
$
\\\\
\multicolumn{2}{c}{
$\infer[\infrule{Sym_\sim}]{s \sim t , \Gamma \xRightarrow{} \Delta}{t \sim s , s \sim t , \Gamma \xRightarrow{} \Delta}$
}
 \end{tabular}
\end{center}
From the fact that these rules are singular geometric, it follows that:

\begin{cor}
 $\mathsf{EQ}$ has the interpolation property. 
\end{cor}

\begin{proof} 
The case of \emph{Ref}$_\sim$  is like that for \emph{Ref}$_=$ in $\mathsf{G}^=$, the only difference being that, when $\sim$ is not in $\mathsf{Rel}(\g,\d)$, the rule  \emph{Ref}$_\sim$ becomes an instance of Case \ref{case3}.\footnote{Otherwise, it is an instance of Case \ref{case1} or of Case \ref{case2}, and then the split-interpolant of the conclusion can be determined as we have shown for \emph{Ref}$_=$, except for the use of the existential quantifier when we have an instance of Case \ref{case2} only and we must quantify away $s$.} 
We consider in detail the cases of \emph{Trans}$_\sim$ and \emph{Sym}$_\sim$.

 Regarding \emph{Trans}$_\sim$, there are four possible partitions of the conclusion: 
 
 \begin{itemize}
  \item $s \sim t , t \sim u , \Gamma_1 \prt \Gamma_2 \Rightarrow \Delta_1 \prt \Delta_2$
  \item $\Gamma_1 \prt s \sim t , t \sim u , \Gamma_2 \Rightarrow \Delta_1 \prt \Delta_2$
  \item $s \sim t , \Gamma_1 \prt t \sim u , \Gamma_2 \Rightarrow \Delta_1 \prt \Delta_2$
  \item $t \sim u , \Gamma_1 \prt s \sim t , \Gamma_2 \Rightarrow \Delta_1 \prt \Delta_2$
  \end{itemize}

For the first two partitions, we find the split-interpolant by reasoning as in Case \ref{case1} with $\Pi_2=\varnothing$ and Case \ref{case2} with $\Pi_1=\varnothing$, respectively. Hence, a split-interpolant for the first and second partitions is:\vspace{0.3cm} 
 
\noindent \scalebox{0.95000}{$$
\framebox{
\infer{s \sim t , t \sim u , \Gamma_1 \prt \Gamma_2 \xRightarrow{C} \Delta_1 \prt \Delta_2}{s \sim u , s \sim t , t \sim u , \Gamma_1 \prt \Gamma_2 \xRightarrow{C} \Delta_1 \prt \Delta_2}\qquad
\infer{\Gamma_1 \prt s \sim t , t \sim u , \Gamma_2 \xRightarrow{C} \Delta_1 \prt \Delta_2}{\Gamma_1 \prt s \sim u , s \sim t , t \sim u , \Gamma_2 \xRightarrow{C} \Delta_1 \prt \Delta_2}
}
$$}\vspace{0.3cm}

For the last two partitions we can proceed as in Case \ref{case1} or as in Case \ref{case2}. By proceeding as in  Case  \ref{case1} we find the following split-interpolants, assuming, respectively, $u\not\in\mathsf{Ter} (s \sim t , \Gamma_1 , \Delta_1)$ and  $s\not\in\mathsf{Ter} (t \sim u , \Gamma_1 , \Delta_1)$:

\begin{footnotesize}
 $$
\framebox{\infer{s \sim t , \g_1 \prt t \sim u, \g_2 \xRightarrow{\forall z (t \sim z \to C[{}_{u}^{z}])} \d_1 \prt \d_2}{s \sim u, s \sim t , t \sim u,\g_1 \prt \g_2 \xRightarrow{C} \d_1 \prt \d_2}
\quad
\infer{t \sim u , \g_1 \prt s \sim t, \g_2 \xRightarrow{\forall z (z \sim t \to C[{}_{s}^{z}])} \d_1 \prt \d_2}{s \sim u, s \sim t , t \sim u ,\g_1 \prt  \g_2 \xRightarrow{C} \d_1 \prt \d_2}
}
$$
\end{footnotesize}

\noindent If, instead, $u\in\mathsf{Ter} (s \sim t , \Gamma_1 , \Delta_1)$ or  $s\in\mathsf{Ter} (t \sim u , \Gamma_1 , \Delta_1)$, then we do not quantify them away and we have:
\begin{footnotesize}
 $$
\framebox{\infer{s \sim t , \g_1 \prt t \sim u, \g_2 \xRightarrow{t \sim u \to C} \d_1 \prt \d_2}{s \sim u, s \sim t , t \sim u,\g_1 \prt \g_2 \xRightarrow{C} \d_1 \prt \d_2}
\quad
\infer{t \sim u , \g_1 \prt s \sim t, \g_2 \xRightarrow{s \sim t \to C} \d_1 \prt \d_2}{s \sim u, s \sim t , t \sim u ,\g_1 \prt  \g_2 \xRightarrow{C} \d_1 \prt \d_2}
}
$$
\end{footnotesize}
 Regarding \emph{Sym}$_\sim$, there are two possible partitions of the conclusion: 
 
 \begin{itemize}
  \item $s \sim t ,  \Gamma_1 \prt \Gamma_2 \Rightarrow \Delta_1 \prt \Delta_2$
  \item $\Gamma_1 \prt s \sim t ,  \Gamma_2 \Rightarrow \Delta_1 \prt \Delta_2$
\end{itemize}
We find the split-interpolant by reasoning as in Case \ref{case1} with $\Pi_2=\varnothing$ and Case \ref{case2} with $\Pi_1=\varnothing$, respectively. Hence we have:

\begin{small}$$
\framebox{
\infer{s \sim t ,  \Gamma_1 \prt \Gamma_2 \xRightarrow{C} \Delta_1 \prt \Delta_2}{t \sim s , s \sim t ,  \Gamma_1 \prt \Gamma_2 \xRightarrow{C} \Delta_1 \prt \Delta_2}\qquad
\infer{\Gamma_1 \prt s \sim t ,  \Gamma_2 \xRightarrow{C} \Delta_1 \prt \Delta_2}{\Gamma_1 \prt t \sim s , s \sim t ,  \Gamma_2 \xRightarrow{C} \Delta_1 \prt \Delta_2}
}
$$\end{small}\end{proof}
\subsection{Partial and linear orders}\label{PO}

Now we consider some well-known order theories. We start with partial orders. In sequent calculus, the theory of partial orders is obtained by extending $\mathsf{Gc^=}$ with the following rules corresponding to the axioms of reflexivity, transitivity and anti-symmetry of a binary relation $\leqslant$. Thus, let $\mathsf{PO} = \mathsf{Gc^=} + \{\textnormal{\emph{Ref}}_\leqslant \, , \, \textnormal{\emph{Trans}}_\leqslant \, , \, \textnormal{\emph{Anti-sym}}_\leqslant\}$:

\begin{center}
 \begin{tabular}{cc}
$
\infer[\infrule{Ref_\leqslant}]{\Gamma \Rightarrow \Delta}{s \leqslant s , \Gamma \Rightarrow \Delta}
$
&
$
 \infer[\infrule{Trans_\leqslant}]{s \leqslant t , t \leqslant u , \Gamma \xRightarrow{} \Delta}{s \leqslant u , s \leqslant t , t \leqslant u , \Gamma \xRightarrow{} \Delta}
$
\\\\
\multicolumn{2}{c}{
$\infer[\infrule{Anti\textnormal{-}sym_\leqslant}]{s \leqslant t , t \leqslant s , \Gamma \xRightarrow{} \Delta}{s = t , s \leqslant t , t \leqslant s , \Gamma \xRightarrow{} \Delta}$
}
 \end{tabular}
\end{center}
Linear orders are obtained by assuming that the partial order $\leqslant$ is also linear, i.e $\mathsf{LO} = \mathsf{PO} + \{ \textnormal{\emph{Lin}}_\leqslant\}$.

$$
\infer[\infrule{Lin_\leqslant}]{\Gamma \Rightarrow \Delta}{s \leqslant t , \Gamma \Rightarrow \Delta & t \leqslant s , \Gamma \Rightarrow \Delta}
$$
Both $\mathsf{PO}$ and $\mathsf{LO}$ are singular geometric theories, hence: 

\begin{cor}
 $\mathsf{LO}$ (hence, $\mathsf{PO}$) has the interpolation property. 
\end{cor}

\begin{proof}
 The procedure for building the interpolants for \emph{Ref}$_\leqslant$ and  \emph{Trans}$_\leqslant$ are the same as those for \emph{Ref}$_\sim$ and \emph{Trans}$_\sim$, respectively, in $\mathsf{EQ}$;  that for \emph{Anti-sym}$_\leqslant$ is like that for \emph{Trans}$_\sim$, save that here there is no need to quantify away any term occurring in the split-interpolant. 
 
 For \emph{Lin}$_\leqslant$,  only one partition of the conclusion has to be considered, namely $\g_1 \prt \g_2 \Rightarrow \d_1 \prt \d_2$. Its interpolant can be found as in Case \ref{case3} of Lemma \ref{th:maehara_singular} with $\Pi_2=\varnothing$, provided that $\leqslant$ is not in $\mathsf{Rel}(\g,\d)$.\footnote{Else, we proceed as in Case \ref{case1} or \ref{case2} and, as for rule \emph{Ref}$\sim$,  in the latter case, when we have to quantify away $s$ and $t$ we do it via existential quantifiers.} Assuming that both $s$ and $t$ are in $\mathsf{Ter}(C1,C_2)$ but not in $\mathsf{Ter}(\g_2,\d_2)$:

$$
\framebox{
  \infer
  {
    \Gamma_1 \prt  \Gamma_2 \xRightarrow{\forall z_1\forall z_2((C_1\lor C_2)[{}_{s}^{z_1}{}_{t}^{z_2}])} \Delta_1 \prt \Delta_2
  }
  {
  s\leqslant t, \Gamma_1 \prt  \Gamma_2 \xRightarrow{C_1} \Delta_1 \prt \Delta_2&t\leqslant s, \Gamma_1 \prt  \Gamma_2 \xRightarrow{C_2} \Delta_1 \prt \Delta_2
  }
  }
  $$
  If, instead,  $s$ or $t$ is in $\mathsf{Ter}(\g_2,\d_2)$, or if it is not in $\mathsf{Ter}(C_1,C_2)$, then it is not  quantified away.
\end{proof}

Unlike $\mathsf{G^=}$ and $\mathsf{EQ}$, the underlying logical calculus of both $\mathsf{PO}$ and $\mathsf{LO}$ is  the classical one. The reason is that linearity is intuitionistically contentious and normally it requires a different, more constructively acceptable, axiomatization that will be considered in Section \ref{sectPPO}.

\subsection{Strict partial and linear orders} 

The theory of strict partial orders consists of the axioms of first-order logic with identity plus the irreflexivity and transitivity of $<$. As we did for $\mathsf{PO}$ and $\mathsf{LO}$, we consider this theory to be based on classical logic, i.e. by adding on top of $\mathsf{Gc}^=$ the following rules: 

\begin{center}
 \begin{tabular}{cc}
 $\infer[\infrule{Irref_<}]{s < s , \Gamma \xRightarrow{} \Delta}{
 }
 \qquad
 \infer[\infrule{Trans_<}]{s < t , t < u , \Gamma \xRightarrow{} \Delta}{s < u , s < t , t < u , \Gamma \xRightarrow{} \Delta}
 $
 
\end{tabular}
\end{center}

Let $\mathsf{SPO}$ be $\mathsf{Gc}^= + \{\textnormal{\emph{Irref}}_< , \textnormal{\emph{Trans}}_<\}$. Total strict partial orders are then obtained assuming that $<$ is also trichotomic, i.e. $\mathsf{SLO} = \mathsf{SPO} + \{ \textnormal{\emph{Trich}}_<\}$:

$$
 \infer[\infrule{Trich_<}]{\Gamma \xRightarrow{} \Delta}{s = t , \Gamma \xRightarrow{} \Delta & s < t , \Gamma \xRightarrow{} \Delta & t < s , \Gamma \xRightarrow{} \Delta}
$$

\begin{cor}
$\mathsf{SLO}$ (hence, $\mathsf{SPO}$) has the interpolation property.

\end{cor}

\begin{proof}
We show how to find the interpolants for \emph{Irref}$_<$ and \emph{Trich}$_<$, while \emph{Trans}$_<$ is identical to \emph{Trans}$_\sim$. 
We start with \emph{Irref}$_<$. There are two possible partitions of its conclusion, namely

\begin{itemize}
 \item $s < s , \Gamma_1 \prt \Gamma_2 \Rightarrow \Delta_1 \prt \Delta_2$
\item $\Gamma_1 \prt s < s , \Gamma_2 \Rightarrow \Delta_1 \prt \Delta_2$
 \end{itemize}

As in Case \ref{case1} with $\Pi_2=\varnothing$  (and $m=0$) and as in Case \ref{case2} with $\Pi_1=\varnothing$  (and $m=0$) of Lemma \ref{th:maehara_singular}, we find the split-interpolant for each partition as follows:  

\

$$
 \framebox{
 \infer{s < s , \Gamma_1 \prt \Gamma_2 \xRightarrow{\bot} \Delta_1 \prt \Delta_2}{
 }
 \qquad
\infer{\Gamma_1 \prt s < s , \Gamma_2 \xRightarrow{\top} \Delta_1 \prt \Delta_2}{\phantom{A^7}
}
 }
 $$

 \

Regarding \emph{Trich}$_<$, we need to consider only one partition of the conclusion, namely $\g_1 \prt \g_2 \Rightarrow \d_1 \prt \d_2$, whose interpolant can be found as  in Case \ref{case3} of Lemma \ref{th:maehara_singular} with $\Pi_2=\varnothing$ when $<$ is not in $\mathsf{Rel}(\g,\d)$.\footnote{Else, as for rule \emph{Ref}$_\sim$, we proceed as in one of Cases \ref{case1} and \ref{case2}.} Assuming that both $s$ and $t$ are in $\mathsf{Ter}(C_1,C_2,C_3)$ but not in $\mathsf{Ter}(\g_2,\d_2)$:

$$
\scalebox{0.90000}{\framebox{
  \infer
  {
    \Gamma_1 \prt  \Gamma_2 \xRightarrow{\forall z_1\forall z_2((C_1\lor C_2\lor C_3)[{}_{s}^{z_1}{}_{t}^{z_2}])} \Delta_1 \prt \Delta_2
  }
  {
 s=t,\g_1\prt\g_2\xRightarrow{C_1}\d_1\prt\d_2& s< t, \Gamma_1 \prt  \Gamma_2 \xRightarrow{C_2} \Delta_1 \prt \Delta_2&t< s, \Gamma_1 \prt  \Gamma_2 \xRightarrow{C_3} \Delta_1 \prt \Delta_2
  }
  }
  }
  $$
  If $s$ or $t$ is in $\mathsf{Ter}(\g_2,\d_2)$, or if it is not in $\mathsf{Ter}(C_1,C_2,C_3)$, then it is not  quantified away.

\end{proof}

\subsection{Apartness}\label{apart} 

We noticed earlier that in intuitionistic theories the identity relation is not always treated as in classical logic. In particular, identity is defined in terms of the more constructively acceptable relation of apartness. Apartness was originally introduced by Brouwer (and later axiomatized by Heyting in \cite{Heyting1956}) to express inequality between real numbers in the constructive analysis of the continuum: whereas saying that two real numbers $a$ and $b$ are unequal only means that the assumption $a = b$ is contradictory, to say that $a$ and $b$ are apart expresses the constructively stronger requirement that their distance on the real line can be effectively measured, i.e. that $|\, a - b \,| > 0$ has a constructive proof.  Classically, inequality and apartness coincide, but intuitionistically two real numbers can be unequal without being apart. The theory of apartness consists of intuitionistic  first-order logic plus the irreflexivity and splitting of $\neq$. Following \cite{Negri1999}, the theory of apartness is formulated by adding on top of $\mathsf{Gi}$ the following rules:\footnote{Notice that Negri's underlying calculus is a quantifier-free version of $\mathsf{Gi}$.} 

\begin{center}
 \begin{tabular}{cc}
 $\infer[\infrule{Irref_{\neq}}]{s \neq s , \Gamma \xRightarrow{} A}{
 }
 \qquad
 \infer[\infrule{Split_{\neq}}]{s \neq t ,\Gamma \xRightarrow{} A}{s \neq u , s \neq t, \Gamma \xRightarrow{} A&t \neq u , s \neq t, \Gamma \xRightarrow{} A}
 $
 
\end{tabular}
\end{center}

Let $\mathsf{AP} = \mathsf{Gi} + \{ \textnormal{\emph{Irref}}_{\neq} \, , \, \textnormal{\emph{Split}}_{\neq}\}$. Given that these two rules are singular geometric rules, it follows that:

\begin{cor}\label{ap}
$\mathsf{AP}$ has the interpolation property. 
\end{cor}

\begin{proof}
As above, we show how to find the interpolants for \emph{Irref}$_{\neq}$ and \emph{Split}$_{\neq}$. The former is identical to that of \emph{Irref}$_<$ in $\mathsf{SPO}$.
 
 In the case of \emph{Split}$_{\neq}$, there are two possible partitions of the conclusion: 
 
 \begin{itemize}
  \item $s \neq  t , \Gamma_1 \prt \Gamma_2 \Rightarrow  \prt A$
  \item $\Gamma_1 \prt s \neq t  ,  \Gamma_2 \Rightarrow  \prt A$

  \end{itemize}

For the first partition, we use  Case \ref{case1}  of Lemma \ref{th:maehara_singular} with $\Pi_2=\varnothing$. Thus, if $u\not\in\mathsf{Ter}(s\neq t,\g_1)$ and $u\in \mathsf{Ter}(C_1,C_2)$
, a split-interpolant for the first partition is:

$$
\framebox{
\infer{s\neq t, \Gamma_1 \prt \Gamma_2 \xRightarrow{\forall z(C_1[{}_{u}^{z}]\lor C_2[{}_{u}^{z}])} \prt A}{s \neq  u , s \neq t ,\Gamma_1 \prt \Gamma_2 \xRightarrow{C_1} \prt A\quad &t \neq  u , s \neq t, \Gamma_1 \prt \Gamma_2 \xRightarrow{C_2} \prt A }
}
$$

For the second partition, we use  Case \ref{case2} of Lemma \ref{th:maehara_singular} with $\Pi_1=\varnothing$. Thus, if $u\not\in\mathsf{Ter}(s\neq t,\g_2,A)$
, a split-interpolant for the second partition is:

$$
\framebox{
\infer{ \Gamma_1 \prt s\neq t,\Gamma_2 \xRightarrow{\exists z(C_1[{}_{u}^{z}]\wedge C_2[{}_{u}^{z}])} \prt A}{\Gamma_1 \prt s \neq  u , s \neq t , \Gamma_2 \xRightarrow{C_1} \prt A\quad & \Gamma_1 \prt t \neq  u , s \neq t,\Gamma_2 \xRightarrow{C_2} \prt A }
}
$$
When $u$ is, respectively, in $\mathsf{Ter}(s\neq t,\g_1)$ or in $\mathsf{Ter}(s\neq t,\g_2,A)$, as well as when it is not in $\mathsf{Ter}(C_1,C_2)$, we do not quantify it away .\end{proof}

\subsection{Positive partial and linear orders}\label{sectPPO} 

Just like apartness is a positive version of inequality, so excess $\nleqslant$ is a positive version of the negation of a partial order $\leqslant$. Excess relation was introduced by von Plato in \cite{vonPlato2001} and has been further investigated by Negri in \cite{Negri1999}.   
The theory of positive partial orders consists of intuitionistic first-order logic plus the irreflexivity and co-transitivity of $\nleqslant$.\footnote{Co-transitivity and splitting should not be confused. In particular, splitting (along with irreflexivity) gives symmetry, whereas co-transitivity does not. This is what distinguishes apartness (which is symmetric) from excess (which in general is not).} Let $\mathsf{PPO} = \mathsf{Gi} + \{ \textnormal{\emph{Irref}}_{\nleqslant} \, , \, \textnormal{\emph{Co-trans}}_{\nleqslant}\}$

\begin{center}
 \begin{tabular}{cc}
 $\infer[\infrule{Irref_{\nleqslant}}]{s \nleqslant s , \Gamma \xRightarrow{} A}{
 }
 \qquad
 \infer[\infrule{Co\textnormal{-}trans_{\nleqslant}}]{s \nleqslant t ,\Gamma \xRightarrow{} A}{s \nleqslant u , s \nleqslant t, \Gamma \xRightarrow{} A&u \nleqslant t , s \nleqslant t, \Gamma \xRightarrow{} A}
 $
 
\end{tabular}
\end{center}

The theory of positive linear orders extends the theory of positive partial orders with the asymmetry of $\nleqslant$. Specifically, let $\mathsf{PLO} = \mathsf{PPO} + \{\textnormal{\emph{Asym}}_\nleqslant\}$:

$$
\infer[\infrule Asym_{\nleqslant}]{s \nleqslant t,t \nleqslant s,\Gamma\To A}{}
$$
 Given that all these rules are singular geometric, from Theorem \ref{CraigSingular} it follows that

\begin{cor}
$\mathsf{PPO}$ and in $\mathsf{PLO}$ have the interpolation property. 
\end{cor}

\begin{proof}
The cases of \emph{Irref$\,{}_{\nleqslant}$} and of \emph{Co-Trans$\,{}_{\nleqslant}$} are like the  analogous cases for rules \emph{Irref$\,{}_{\neq}$} and  \emph{Split$\,{}_{\neq}$} and the split-interpolants can be obtained by those in the proof of Corollary \ref{ap}. For rule \emph{Asym$\,{}_{\nleqslant}$} we have four possible partitions  of the conclusion
 \begin{itemize}
  \item $s \nleqslant t , t \nleqslant s , \Gamma_1 \prt \Gamma_2 \Rightarrow \prt A$
  \item $\Gamma_1 \prt s \nleqslant t , t \nleqslant  s , \Gamma_2 \Rightarrow  \prt A$
  \item $s \nleqslant t , \Gamma_1 \prt t \nleqslant s , \Gamma_2 \Rightarrow  \prt A$
  \item $t \nleqslant s , \Gamma_1 \prt s \nleqslant t , \Gamma_2 \Rightarrow  \prt A$
  \end{itemize}
  
  Their split-interpolants are like those for rule \emph{Anti-sym}$_\leqslant$, except that here we have a 0-premise rule.  For the first and second partitions we have, respectively:

$$
\framebox{
\infer{s \nleqslant t , t \nleqslant s , \Gamma_1 \prt \Gamma_2 \xRightarrow{\bot}  \prt A}{\phantom{C}}\qquad\qquad
\infer{\Gamma_1 \prt s \nleqslant t , t \nleqslant s , \Gamma_2 \xRightarrow{\top}  \prt A}{\phantom{C}}
}
$$

\noindent Finally,  for the last two partitions we have, respectively:

 $$
\framebox{\infer{s \nleqslant t , \g_1 \prt t \nleqslant s, \g_2 \xRightarrow{t \nleqslant s \to \bot} \prt A}{\phantom{C}}
\quad
\infer{t \nleqslant s , \g_1 \prt s \nleqslant t, \g_2 \xRightarrow{s \nleqslant t\to\bot}  \prt A}{\phantom{C}}
}
$$
\end{proof}
 
 To conclude, we have shown (Lemma \ref{th:maehara_singular}) how to extend Maehara's lemma to extensions of classical and intuitionistic sequent calculi with singular geometric rules and provided a number of interesting examples of singular geometric rules that are important both in logic and mathematics, especially in order theories. In particular, we have shown that Lemma \ref{th:maehara_singular} covers first-order logic with identity and its extension with the theory of (strict) partial and linear orders. We have also proved that the same holds for the intuitionistic theories of apartness, as well as for positive partial and linear order. Along the way, we have also provided a cut-elimination theorem for geometric extensions $\mathsf{Gi^g}$ of the intuitionistic single-succedent calculus $\mathsf{Gi}$.

\vspace{0,5cm}

\textbf{Acknowledgements:} We are very grateful to Birgit Elbl for precious comments and helpful discussions on various points. We also thank an anonymous referee for valuable suggestions that have helped to generalize our main result as well as to improve its exposition.

  \bibliographystyle{plain}
  \bibliography{RefList}  
\end{document}